\newtheorem{thm}{Theorem}
\newtheorem{cor}{Corollary}
\newtheorem{prop}{Proposition}
\newtheorem{lem}{Lemma}
\newcommand{\Hc}{\mathscr{H}}
\newcommand{\Est}{\mathscr{C}}
\newtheorem{rem}{Remark}
\newtheorem{exa}{Example}
\numberwithin{equation}{section}
\newcommand\nolabel[1]{\nonumber}
\newcommand\A{\mathscr{A}}
\newcommand\R{\mathbb{R}}
\renewcommand\P{\mathscr{P}}
\newcommand\N{\mathbb{N}}
\newcommand{\M}{\mathscr{M}}
\newcommand{\Nm}{\mathscr{N}}
\newcommand{\abs}[1]{\left| #1 \right| }
\newcommand{\ceil}[1]{\lceil #1 \rceil}
\DeclareFontFamily{U}{mathb}{}
\DeclareFontShape{U}{mathb}{m}{n}{
  <-5.5> mathb5
  <5.5-6.5> mathb6
  <6.5-7.5> mathb7
  <7.5-8.5> mathb8
  <8.5-9.5> mathb9
  <9.5-11.5> mathb10
  <11.5-> mathb12
}{}
\DeclareSymbolFont{mathb}{U}{mathb}{m}{n}
\DeclareMathSymbol{\csqs}{\mathbin}{mathb}{"0D}
\author[P. Pasteczka]{Pawe\l{} Pasteczka}
\address{Institute of Mathematics \\ Pedagogical University of Krak\'ow \\ Podchor\k{a}\.zych str. 2, 30-084 Krak\'ow, Poland}
\email{pawel.pasteczka@up.krakow.pl}
\subjclass[2010]{26E60, 26D15}
\keywords{Hardy inequality, Hardy constant, convexity, subinvariance, power means, mixed means}
\newcommand{\operator}[1]{\mathop{\vphantom{\sum}\mathchoice
{\vcenter{\hbox{\LARGE $#1$}}}
{\vcenter{\hbox{\Large $#1$}}}{#1}{#1}}\displaylimits}
\def\Mst_#1^#2{\operator{\mathscr{M}_{\mbox{\scriptsize$\#$}}\!\!}_{#1}^{#2}\,\,}
\def\Mm{\operator{\mathscr{M}}}
\numberwithin{equation}{section}
\def\eq#1{{\rm(\ref{#1})}}
\def\Eq#1#2{\ifthenelse{\equal{#1}{*}}
  {\begin{equation*}\begin{aligned}[]#2\end{aligned}\end{equation*}}
  {\begin{equation}\begin{aligned}[]\label{#1}#2\end{aligned}\end{equation}}}
\title{On the Hardy property of mixed means}
\begin{document}

\begin{abstract}
 Hardy property of means has been extensively studied by P\'ales and Pasteczka since 2016. The core of this research is based on few of their properties: concavity, symmetry, monotonicity, repetition invariance and homogeneity (last axiom was recently omitted using some homogenizations techniques). In the present paper we deliver a study of possible omitting monotonicity and replacing repetition invariance by a weaker axiom. 
 
 These results are then used to establish the Hardy constant for certain types of mixed means.
\end{abstract}

\maketitle

\section{Introduction}
The notion of Hardy means was formally introduced by P\'ales-Persson in 2004 \cite{PalPer04}, however its origin goes back to 1920s/30s when there appear a series of papers by, among others, Hardy \cite{Har20}, Landau \cite{Lan21}, Knopp \cite{Kno28}, and Carleman \cite{Car32}.  In order to present their results in an appropriate setup recall that for $p\in\R$ the $p$th power mean of the positive numbers $x_1,\dots,x_n$ equals
\Eq{PM}{
  \P_p(x_1,\dots,x_n)
   :=\left\{\begin{array}{ll}
    \Big(\dfrac{x_1^p+\cdots+x_n^p}{n}\Big)^{\frac{1}{p}} 
      &\mbox{if }p\neq0, \\[3mm]
      \sqrt[n]{x_1\cdots x_n}\qquad
      &\mbox{if }p=0.
    \end{array}\right.
}

Then all early results mentioned above can be expressed in a compact form
\Eq{*}{
\sum_{n=1}^\infty \P_p(x_1,\dots,x_n) \le \gamma_p \sum_{n=1}^\infty x_n \qquad \text{ for all }(x_n)_{n=1}^\infty \in \ell_1(\R_+),
}
where $p \in (-\infty,1)$ and
\Eq{*}{
\gamma_p:=
\begin{cases}
      \big(1-p\big)^{-\frac1p} &\mbox{if }p \in (-\infty,1)\setminus\{0\}, \\[1mm]
      e &\mbox{if }p=0.
\end{cases}
}
Moreover all these constants are sharp, i.e. they cannot be diminished. 

As~the matter of fact this area has been active ever since that period. Detailed history is presented in papers Pe\v{c}ari\'c--Stolarsky \cite{PecSto01}, Duncan--McGregor \cite{DunMcg03}, and the book of Kufner--Maligranda--Persson \cite{KufMalPer07fixed}. 
There are a number of generalizations and related results, however we are especially interest in two of them. First, following P\'ales and Persson \cite{PalPer04}, mean $\M \colon \bigcup_{n=1}^{\infty} I^n \to I$ (here and in the sequel $I\subseteq\R$ is a nondegenerated interval with $\inf I=0$) is called a \emph{Hardy mean} (or has a Hardy property) if there exists a constant $C \in [1,+\infty)$ such that
\Eq{E:Hc}{
\sum_{n=1}^\infty \M(x_1,\dots,x_n) \le C \sum_{n=1}^\infty x_n \qquad \text{ for all }(x_n)_{n=1}^\infty \in \ell_1(I).
}
To clarify this definition recall that a mean (on $I$) is an arbitrary function $\M \colon \bigcup_{n=1}^{\infty} I^n \to I$ with $\min \le \M \le \max$.

Now, due to P\'ales and Pasteczka \cite{PalPas16} we define the \emph{Hardy constant of $\M$} as the smallest extended real number $C$ satisfying property \eq{E:Hc} and denote it as $\Hc(\M)$. In this manner a mean admit the Hardy property if and only if it has a finite Hardy constant. Furthermore $\Hc(\P_p)=\gamma_p$ for all $p \in (-\infty,1)$.

In a finite form, following \cite{PalPas16}, for $n\in\N$, we define $\Hc_n(\M)$ to be the smallest nonnegative number such that
\Eq{E:HnM}{
\M(x_1)+\dots+\M(x_1,\dots,x_n) \le \Hc_n(\M)(x_1+\cdots+x_n)
}
holds for all $(x_1,\dots,x_n) \in I^n$. The sequence $\big(\Hc_n(\M)\big)_{n=1}^\infty$ will be called the \emph{Hardy sequence of $\M$}. Due to the mean value property, we easily obtain the inequality $1\leq\Hc_n(\M)\leq n$.  
This sequence was of some interest among the years. For example Kaluza and Szeg\H{o} 
\cite{KalSze27} proved $\Hc_n(\P_p) \le \tfrac{1}{n(\exp(1/n)-1)} \cdot \gamma_p$ for $p \in [0,1)$ and $n \in 
\N$. Furthermore it is known \cite[p.~267]{HarLitPol34} that $\Hc_n(\P_0) \le (1+\tfrac1n)^n$ for all $n \in \N$. There are also a number of other results like \cite{WuZhaWan08} where the approximate values of $(\Hc_n(\P_0))_{n=1}^{12}$ was given. We are not going to recall them in details as they are outside the scope of this paper.

Hardy sequences have an interesting limit behavior.  Let us put this as a proposition since we are going to refer to it further.
\begin{prop}[\cite{PalPas16}, Proposition 3.1]
\label{prop:Hn->Hinfty}
For every mean $\M \colon \bigcup_{n=1}^{\infty} I^n \to I$, its Hardy sequence is nondecreasing and converges to $\Hc(\M)$.
\end{prop}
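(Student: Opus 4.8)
The plan is to rewrite the defining relation \eq{E:HnM} as a supremum of Hardy ratios,
\Eq{*}{
\Hc_n(\M) = \sup\left\{ \frac{\M(x_1)+\cdots+\M(x_1,\dots,x_n)}{x_1+\cdots+x_n}\ :\ (x_1,\dots,x_n)\in I^n \right\},
}
which is immediate, and then to treat the two assertions separately: first the monotonicity of $\big(\Hc_n(\M)\big)$, and afterwards the identification of its limit with $\Hc(\M)$, which (the sequence being monotone) reduces to the single equality $\Hc(\M)=\sup_n\Hc_n(\M)$.

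For monotonicity I would fix an arbitrary $n$-tuple $(x_1,\dots,x_n)\in I^n$ and abbreviate $S:=x_1+\cdots+x_n$ and $T:=\M(x_1)+\cdots+\M(x_1,\dots,x_n)$. The key idea, and the only genuinely non-routine point of the whole proof, is to realize the ratio $T/S$ as a limit of length-$(n+1)$ Hardy ratios by appending a vanishing coordinate. Concretely, I would pick some $t\in I$ (arbitrarily small, since $\inf I=0$) and apply the definition of $\Hc_{n+1}(\M)$ to the tuple $(x_1,\dots,x_n,t)$; because $\M(x_1,\dots,x_n,t)\ge\min(x_1,\dots,x_n,t)\ge 0$, the last summand may be discarded to yield $T\le \Hc_{n+1}(\M)\,(S+t)$. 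Letting $t\to 0^+$ gives $T/S\le \Hc_{n+1}(\M)$, and taking the supremum over all $n$-tuples yields $\Hc_n(\M)\le \Hc_{n+1}(\M)$. The delicate part here is precisely the freedom to push $t$ to $0$ inside $I$ without $0$ necessarily lying in $I$, which is exactly what $\inf I=0$ guarantees.

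It remains to prove $\Hc(\M)=\sup_n\Hc_n(\M)$. For ``$\le$'' in the form $\sup_n\Hc_n(\M)\le\Hc(\M)$ I may assume $\Hc(\M)<\infty$, fix a finite tuple $(x_1,\dots,x_n)$, and extend it to a sequence $(x_1,\dots,x_n,y_{n+1},y_{n+2},\dots)\in\ell_1(I)$ whose tail $\sum_{j>n}y_j$ is as small as desired (again using $\inf I=0$); applying \eq{E:Hc} and dropping the nonnegative terms $\M(x_1,\dots,x_n,y_{n+1},\dots,y_k)$ with $k>n$ on the left gives $\sum_{k=1}^n\M(x_1,\dots,x_k)\le \Hc(\M)\big(S+\sum_{j>n}y_j\big)$, whence, letting the tail vanish and taking the supremum, $\Hc_n(\M)\le\Hc(\M)$ for every $n$. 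For the reverse inequality I set $C:=\sup_n\Hc_n(\M)$ (nothing to prove if $C=+\infty$, so assume $C<\infty$) and take any $(x_k)\in\ell_1(I)$; for each $N$, \eq{E:HnM} gives $\sum_{k=1}^N\M(x_1,\dots,x_k)\le \Hc_N(\M)\sum_{k=1}^N x_k\le C\sum_{k=1}^\infty x_k$, so the left-hand partial sums are nondecreasing and bounded, hence convergent, and passing to the limit $N\to\infty$ shows that $C$ satisfies \eq{E:Hc}, i.e. $\Hc(\M)\le C$. Combining the two inequalities gives $\Hc(\M)=\sup_n\Hc_n(\M)=\lim_{n\to\infty}\Hc_n(\M)$, completing the argument.
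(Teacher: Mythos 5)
Your proposal is correct, but there is an important point of comparison to make: the paper contains no proof of this proposition at all. It is stated as a quoted result, with attribution to \cite{PalPas16} (Proposition 3.1 there), and is used as a black box later on; so your argument cannot be matched against an internal proof, only judged on its own merits. On those merits it is complete and sound. The two genuinely nontrivial devices are exactly the right ones: for monotonicity, appending a coordinate $t\in I$ to an $n$-tuple and letting $t\to 0^+$, which is legitimate precisely because $\inf I=0$ guarantees arbitrarily small elements of $I$ (note that $0$ itself need not belong to $I$, so this limiting step cannot be skipped); and for the identification of the limit, the two-sided comparison of $\sup_n\Hc_n(\M)$ with $\Hc(\M)$ --- padding a finite tuple with an $\ell_1$-tail of arbitrarily small sum and discarding the (nonnegative) extra mean values to get $\Hc_n(\M)\le\Hc(\M)$, and truncating an arbitrary $\ell_1(I)$ sequence together with monotone convergence of the partial sums to get $\Hc(\M)\le\sup_n\Hc_n(\M)$. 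You also handle the infinite cases ($\Hc(\M)=+\infty$ or $\sup_n\Hc_n(\M)=+\infty$) correctly by reducing to trivialities, and the finiteness of each $\Hc_n(\M)$ (indeed $\Hc_n(\M)\le n$, as the paper remarks) means the monotonicity step never involves comparing infinities. This is, in all likelihood, essentially the argument of the cited source, and it would stand as a self-contained proof.
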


There appear a natural question how to obtain the Hardy constant for a given mean. The first important step was to find the lower bound of $\Hc(\M)$. The idea was to use the Stolz–Ces\`aro theorem to the series $\sum x_n$ and $\sum \M(x_1,\dots,x_n)$. This lead us to the next proposition.
\begin{prop}[\cite{PalPas16}, Theorem~3.3]
\label{prop:lower_estim_Hc}
Let $\M \colon \bigcup_{n=1}^{\infty} I^n \to I$ be a mean. Then, for all sequences $(x_n)_{n=1}^\infty$ in $I$ that 
does not belong to $\ell_1$,
\Eq{*}{
  \liminf_{n \to \infty} x_n^{-1}\M(x_1,\ldots,x_n)\leq\Hc(\M).
}
\end{prop}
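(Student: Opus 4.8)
The plan is to compare the partial sums of the sequence $M_n:=\M(x_1,\dots,x_n)$ with those of $(x_n)$ through the Stolz--Ces\`aro theorem, exactly as suggested above. Throughout write $S_n:=x_1+\cdots+x_n$; since $(x_n)\notin\ell_1$ we have $S_n\to+\infty$, so in particular $S_n>0$ for all large $n$. If $\Hc(\M)=+\infty$ there is nothing to prove, so I assume $\Hc(\M)<\infty$.

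First I would record a uniform bound on the partial-sum ratios. Applying the definition \eq{E:HnM} of $\Hc_n(\M)$ to the concrete vector $(x_1,\dots,x_n)$ gives $\sum_{k=1}^n M_k\le\Hc_n(\M)\,S_n$, and by Proposition~\ref{prop:Hn->Hinfty} the Hardy sequence is nondecreasing with limit $\Hc(\M)$, so $\Hc_n(\M)\le\Hc(\M)$ for every $n$. Hence
\Eq{*}{
\frac{M_1+\cdots+M_n}{S_n}\le\Hc(\M)\qquad\text{for all large }n.
}

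The $\tfrac{\infty}{\infty}$ form of Stolz--Ces\`aro, applied to $a_n=M_1+\cdots+M_n$ and $b_n=S_n\uparrow+\infty$, gives $\liminf_n\frac{a_n-a_{n-1}}{b_n-b_{n-1}}\le\liminf_n\frac{a_n}{b_n}$, that is $\liminf_n\frac{M_n}{x_n}\le\liminf_n\frac{M_1+\cdots+M_n}{S_n}$, and combining this with the previous display finishes the proof. The one hypothesis that is not automatic---and the main obstacle---is that Stolz--Ces\`aro requires $b_n$ to be \emph{strictly} increasing, whereas $x_n=0$ is permitted when $0\in I$, so $S_n$ is only nondecreasing.

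I would remove this obstacle by carrying out directly the summation that underlies the theorem, which also makes the argument self-contained. Suppose, to the contrary, that $\liminf_n M_n/x_n>c$ for some $c>\Hc(\M)$. Then there is an $N$ with $M_n\ge c\,x_n$ for every $n\ge N$: for indices with $x_n>0$ this is the meaning of the $\liminf$, while for indices with $x_n=0$ it reads $M_n\ge0$, which holds since $M_n\ge\min(x_1,\dots,x_n)\ge0$. Summing these inequalities and dividing by $S_n$ yields, for $n>N$,
\Eq{*}{
\frac{M_1+\cdots+M_n}{S_n}\ge c+\frac{(M_1+\cdots+M_N)-c\,S_N}{S_n},
}
whose right-hand side tends to $c>\Hc(\M)$ as $n\to\infty$ because $S_n\to+\infty$. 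This contradicts the uniform bound, and therefore $\liminf_n M_n/x_n\le\Hc(\M)$, as claimed.
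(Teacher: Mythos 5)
Your proof is correct and takes exactly the route the paper intends: the paper itself states this proposition without proof (it is imported from \cite{PalPas16}, Theorem~3.3), and its only indication of the argument is the application of the Stolz--Ces\`aro theorem to the series $\sum x_n$ and $\sum \M(x_1,\dots,x_n)$, which---together with the bound $\Hc_n(\M)\le\Hc(\M)$ from Proposition~\ref{prop:Hn->Hinfty}---is precisely what you do. Your replacement of the Stolz--Ces\`aro citation by the direct summation (to avoid the strict monotonicity hypothesis when some $x_n$ could vanish) is a sound, self-contained rendering of the same idea rather than a different approach.
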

If we apply this proposition to the family of harmonic sequences $\{(\tfrac yn)_{n=1}^\infty \colon y \in I\}$ we obtain the 
following lower estimation of $\Hc(\M)$.
\begin{prop}\label{prop:LowHar}
Let $\M \colon \bigcup_{n=1}^{\infty} I^n \to I$ be a mean. Then $\Hc(\M) \ge \Est(\M)$, where
\Eq{E:EstM}{
\Est(\M):=\sup_{y\in I} \liminf_{n \to \infty} \frac ny \cdot \M\left(\frac y1,\frac y2,\ldots,\frac yn 
\right).
}
\end{prop}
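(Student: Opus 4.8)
The plan is to derive this directly from Proposition~\ref{prop:lower_estim_Hc} by feeding it the family of harmonic sequences indexed by $y$. First I would fix an arbitrary $y \in I$ and set $x_n := y/n$ for $n \in \N$. Before invoking the proposition I must verify that $(x_n)_{n=1}^\infty$ is admissible for it, which amounts to two checks. On the one hand, since $\inf I = 0$ and $y \in I$, every term $x_n = y/n$ lies in the half-open interval $(0,y] \subseteq I$, so the sequence indeed takes values in $I$. On the other hand, $\sum_{n=1}^\infty x_n = y \sum_{n=1}^\infty \tfrac1n = +\infty$, so $(x_n)_{n=1}^\infty \notin \ell_1$.

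With both conditions in place, Proposition~\ref{prop:lower_estim_Hc} applies verbatim to this sequence and yields
\[
\liminf_{n\to\infty} \frac ny \cdot \M\Big(\frac y1,\frac y2,\ldots,\frac yn\Big) = \liminf_{n\to\infty} x_n^{-1}\M(x_1,\dots,x_n) \le \Hc(\M).
\]
Since $y \in I$ was arbitrary, the constant $\Hc(\M)$ is a common upper bound for all the $\liminf$ expressions appearing in the definition \eq{E:EstM} of $\Est(\M)$. Passing to the supremum over $y \in I$ on the left-hand side therefore gives $\Est(\M) \le \Hc(\M)$, which is exactly the assertion.

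I expect no genuine obstacle here: the whole argument is a substitution into an already-established proposition followed by taking a supremum. The only points deserving any care are the two admissibility conditions, and among them it is the membership $y/n \in I$ that should be stated explicitly, since it rests precisely on the standing hypothesis $\inf I = 0$ together with $y \in I$ (guaranteeing $(0,y]\subseteq I$). Once this is observed, the divergence of the harmonic tail, and hence the failure of $\ell_1$-summability, is immediate.
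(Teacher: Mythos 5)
Your proposal is correct and is exactly the paper's argument: the paper derives Proposition~\ref{prop:LowHar} by applying Proposition~\ref{prop:lower_estim_Hc} to the family of harmonic sequences $\{(\tfrac yn)_{n=1}^\infty \colon y\in I\}$ and taking the supremum over $y$. Your added verification that $y/n\in I$ (via $\inf I=0$ and the interval property) and that the sequence is not in $\ell_1$ just makes explicit what the paper leaves implicit.
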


The problem of calculating $\Est(\M)$ seams to be significantly easier than the Hardy constant. Nevertheless some results contained in \cite{PalPas16} shown that these two constants are closely related to each other in a broad class of means (see Proposition~\ref{prop:PalPas16Thm3.4} below). 

\medskip
Let us emphasize that a number of results in \cite{PalPas16} which refers to Hardy property claim so-called repetition invariance of mean. We aim to relax this assumption to repetition superinvariace (which is a new definition). Later we show some examples of repetition superinvariant means which are not repetition invariant and establish their Hardy constant.

Just for technical reason, using some recent results concerning homogenizations \cite{PalPas19a}, we restrict our consideration to $I=\R_+$ (without any loss of generality) which provide us some additional properties. Later, we reproved few results from \cite{PalPas16} in this more general (superinvariant) setting and, finally, establish Hardy properties among two broad families of mixed power means. 

\section{Properties of means}

Let us now introduce some important properties of means. 
We begin with few conventions which help us to avoid misunderstandings. Let $\N:=\{1,2,\dots\}$ and for $n \in \N$ define $\N_n:=\{1,\dots,n\}$, as it is handy. 

A mean $\M$ on $I$ is said to be \emph{symmetric}, if for all $n \in \N$, 
$x \in I^n$, and a permutatuion $\sigma \colon \N_n \to \N_n$, the equality $\M(x) =\M(x\circ\sigma)$ is valid.
We~call a mean $\M$ to be \emph{Jensen concave} if, for all $n \in \N$, its restriction $\M|_{I^n}$ is Jensen concave.
We can introduce Jensen convexity in a similar manner. Note that, since $\M$ is locally bounded, the Bernstein--Doetsch Theorem \cite{BerDoe15} implies that its Jensen concavity (convexity) is in fact equivalent to concavity (convexity). 
A mean $\M$ is said to be \emph{monotone} (or \emph{nondecreasing}) if for all $n \in \N$, the restriction $\M|_{I^n}$ is nondecreasing in each of its entry. Assuming that $I = \R_+$, we call a mean $\M$ \emph{homogeneous}, if for all $t>0$, $n\in\N$ and 
$x \in \R_+^n$, we have $\M(tx)=t\M(x)$. Mean $\M$ is called \emph{repetition invariant} if, for all $n,m\in\N$
and $(x_1,\dots,x_n)\in I^n$, the following identity is satisfied
\Eq{*}{
  \M(\underbrace{x_1,\dots,x_1}_{m\text{-times}},\dots,\underbrace{x_n,\dots,x_n}_{m\text{-times}})
   =\M(x_1,\dots,x_n).
}

Finally a symmetric mean $\M$ is \emph{associative} if for all $n,m\in\N$, and a pair of vectors $(x_1,\dots,x_n) \in I^n$ and $y=(y_1,\dots,y_m) \in I^m$ we have
\Eq{*}{
\M(x_1,\dots,x_n,y_1,\dots,y_m)=\M(x_1,\dots,x_n,\underbrace{\mu,\dots,\mu}_{m\text{-times}})\text{, where }\mu:=\M(y).
}
This axiom is very characteristic for quasiarithmetic means (see \cite{Kol30}). Furthermore it is easy to show that every associative mean is repetition invariant.

To conclude this section let us go back to the Hardy property. As it was already mentioned, due to \cite{PalPas16} the Hardy constant is linked to $\Est(\M)$ defined by \eq{E:EstM}. To clarify this link we recall the important result binding them even stronger than Proposition~\ref{prop:LowHar}. 

\begin{prop}[\cite{PalPas16}, Theorem 3.4]\label{prop:PalPas16Thm3.4}
Let $\M \colon \bigcup_{n=1}^{\infty} \R_+^n \to\R_+$ be an increasing, symmetric, repetition invariant, and Jensen 
concave mean. Then $\Hc(\M)=\Est(\M)$.
\end{prop}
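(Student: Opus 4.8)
The plan is to establish the reverse inequality $\Hc(\M)\le\Est(\M)$, since Proposition~\ref{prop:LowHar} already gives $\Hc(\M)\ge\Est(\M)$ for every mean. By Proposition~\ref{prop:Hn->Hinfty} it suffices to bound each term $\Hc_n(\M)$ of the Hardy sequence by $\Est(\M)$, or at least to show $\limsup_{n\to\infty}\Hc_n(\M)\le\Est(\M)$. So I would fix $n$ and an arbitrary input vector $(x_1,\dots,x_n)\in\R_+^n$, and try to estimate the sum $\sum_{k=1}^n\M(x_1,\dots,x_k)$ from above by $\Est(\M)\cdot(x_1+\cdots+x_n)$.

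The key idea is to exploit the three remaining hypotheses—symmetry, monotonicity, and Jensen concavity—to reduce an arbitrary input to the harmonic-type inputs $\bigl(\tfrac y1,\dots,\tfrac yk\bigr)$ that appear in the definition of $\Est(\M)$. First I would use symmetry together with concavity to bound $\M(x_1,\dots,x_k)$ in terms of a mean evaluated at \emph{sorted} or \emph{averaged} data: concavity of $\M|_{I^k}$ implies that replacing the arguments by suitable averages can only increase the value, which is the mechanism that connects a general profile to a monotone decreasing one. The crucial step is then a Chebyshev/rearrangement-type argument: since the harmonic weights $\tfrac1k$ are decreasing, the worst case for the ratio $\tfrac{\sum_k\M(x_1,\dots,x_k)}{\sum_k x_k}$ is attained (asymptotically) on decreasing sequences, and these are precisely captured by the family $y/k$. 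Here is where repetition invariance enters: it lets me pass freely between the finite configurations and the infinite harmonic sequence, identifying the right normalization.

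To make the limit argument precise I would pick $y\in I$ nearly attaining the supremum in \eq{E:EstM}, so that for large $k$ one has $\tfrac ky\,\M\bigl(\tfrac y1,\dots,\tfrac yk\bigr)\le\Est(\M)+\varepsilon$. Summing these estimates and comparing against $\sum_k \tfrac yk$, I would feed the result into Proposition~\ref{prop:Hn->Hinfty} to conclude $\Hc(\M)=\lim_n\Hc_n(\M)\le\Est(\M)+\varepsilon$, and then let $\varepsilon\to0$. The technical heart is showing that the harmonic sequence is genuinely extremal, i.e.\ that no other non-summable input forces a larger ratio; monotonicity and concavity should bound the contribution of any input by that of its decreasing rearrangement, and homogeneity (via $I=\R_+$) removes the dependence on scale.

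**The main obstacle** I anticipate is precisely this extremality reduction: turning the pointwise concavity/monotonicity inequalities into a clean \emph{uniform} comparison between the Hardy ratio of an arbitrary sequence and that of the harmonic family, controlled by $\Est(\M)$ with only an $\varepsilon$ loss. Concavity gives inequalities in one direction at each fixed arity, but combining them across all $k\le n$ while keeping the normalization $\sum_k x_k$ fixed requires an Abel-summation or convexity-in-the-profile argument that I expect to be delicate. Once that comparison is in place, the passage to the limit and the matching lower bound from Proposition~\ref{prop:LowHar} finish the proof.
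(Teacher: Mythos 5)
Your overall skeleton (lower bound from Proposition~\ref{prop:LowHar}, upper bound by estimating the Hardy sequence and passing to the limit via Proposition~\ref{prop:Hn->Hinfty}) matches the structure of the argument in \cite{PalPas16} --- which this paper only cites, but whose core it reconstructs in Lemma~\ref{lem:matrix}, Proposition~\ref{prop:Ked} and the Theorem after Corollary~\ref{cor:H1}. However, the technical heart is missing from your proposal, and the substitute you sketch for it does not work. The step that actually drives the upper bound is a Kedlaya-type mixed-mean inequality: for a symmetric, Jensen-concave, repetition invariant mean,
\begin{equation*}
\frac1n\sum_{i=1}^{n}\M(x_1,\dots,x_i)\le\M\Big(x_1,\frac{x_1+x_2}{2},\dots,\frac{x_1+\cdots+x_n}{n}\Big),
\end{equation*}
and its proof is not a soft convexity argument: it rests on the combinatorial $n!\times n!$ matrix of Lemma~\ref{lem:matrix}, with prescribed multiplicities of the indices $1,\dots,n$ in each row and column, to which Jensen concavity and symmetry of $\M$ are applied rows-versus-columns. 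Only \emph{after} this inequality does monotonicity enter, and in a very specific way: it allows one to replace each Ces\`aro average $\frac{x_1+\cdots+x_i}{i}$ by $\frac si$, where $s=\sum_n x_n$, which produces exactly the harmonic-type vector $(s/1,\dots,s/n)$; then $\sum_{i=1}^{\infty}\M(x_1,\dots,x_i)\le\liminf_{n} n\,\M\big(\tfrac s1,\dots,\tfrac sn\big)\le\Est(\M)\,s$ directly from definition \eq{E:EstM} with $y=s$ --- note that no homogeneity is needed (and none is assumed in the statement), since the supremum over $y$ absorbs the scale.

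The gap is precisely your proposed replacement for this step: a ``Chebyshev/rearrangement-type'' extremality claim that the worst Hardy ratio is attained on decreasing sequences, so that arbitrary inputs can be compared with their decreasing rearrangement. No such rearrangement principle follows from symmetry, monotonicity and concavity: each term $\M(x_1,\dots,x_k)$ depends on which elements appear among the first $k$, and rearranging alters all terms of the sum simultaneously in no controlled direction; you yourself flag this reduction as an unresolved obstacle. Similarly, ``replacing the arguments by suitable averages can only increase the value'' is not a pointwise consequence of concavity --- it holds only in the averaged, Jensen sense, and making that quantitative is exactly what the matrix construction accomplishes. Repetition invariance, finally, is not used to ``pass to the infinite harmonic sequence'' but inside the Kedlaya argument, to collapse the repeated blocks that the matrix construction inevitably produces (compare the explicit repetition counts $\tfrac{n!}{i}$ and $(n-1)!$ in Proposition~\ref{prop:Ked}). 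So the proposal correctly identifies the easy half and the general reduction, but does not contain, or correctly anticipate, the one genuinely hard ingredient of the proof.
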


\subsection{Homogenizations} 

We also use the notion of homogenization from \cite{PalPas19a}. The idea was to associate the homogeneous mean to a given one. 
Indeed, according to \cite{PalPas19a}, for a mean $\M$ on an interval $I$ (recall that we claim $\inf I=0$) we introduce the (local) homogenizations
$\M_\#, \M^\# \colon \bigcup_{n=1}^\infty \R_+^n \to \R_+$ by 
\Eq{*}{
\M_\#(x) := \liminf_{t \to 0} \tfrac1t \M(tx)
\quad \text{ and }\quad
\M^\#(x) := \limsup_{t \to 0} \tfrac1t \M(tx)\:.
}
In the case when $\M$ is Jensen concave these two means coincide with each other, which can be expressed formally in the following statement.
\begin{prop}[\cite{PalPas19a}, Theorem~2.1]\label{prop:PalPas19aThm2.1}
Let $\M$ be a Jensen concave mean on $I$. Then $\M_\# = \M^\#$ and these means are also Jensen concave. 

In addition, $\M \le \M_\# = \M^\#$ on the domain of $\M$.
\end{prop}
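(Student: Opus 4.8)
The plan is to fix $n\in\N$ and $x\in\R_+^n$ and reduce everything to the single-variable function $\phi(t):=\M(tx)$, studied for small $t>0$. Since $\inf I=0$ and $I$ is an interval, we have $tx\in I^n$ for all sufficiently small $t>0$, so $\phi$ is well defined on some interval $(0,t_0)$; being the restriction of the concave function $\M|_{I^n}$ (Jensen concavity upgrades to concavity by Bernstein--Doetsch, as the paper already notes) to the ray $t\mapsto tx$, the function $\phi$ is concave in $t$. The mean property $\min\le\M\le\max$ gives the squeeze $t\min(x)\le\phi(t)\le t\max(x)$, so $\phi(t)\to0$ as $t\to0^+$; setting $\phi(0):=0$ extends $\phi$ concavely to $[0,t_0)$. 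The whole argument then hinges on the behaviour of $g(t):=\phi(t)/t=\tfrac1t\M(tx)$, which is precisely the quantity whose lower and upper limits define $\M_\#(x)$ and $\M^\#(x)$.

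Next I would show that $g$ is nonincreasing on $(0,t_0)$. This is the standard monotonicity of secant slopes of a concave function anchored at the origin: for $0<s<t<t_0$, concavity together with $\phi(0)=0$ gives $\phi(s)\ge\tfrac{s}{t}\phi(t)$, whence $g(s)\ge g(t)$. Being nonincreasing and bounded (the squeeze yields $\min(x)\le g(t)\le\max(x)$), $g$ has a limit as $t\to0^+$ equal to $\sup_{t}g(t)$. Consequently the liminf and limsup coincide, which is exactly $\M_\#(x)=\M^\#(x)$, and the common value is a genuine mean since $\min(x)\le\M_\#(x)\le\max(x)$. Moreover, when $x$ lies in the domain of $\M$ the value $t=1$ is admissible, and since the limit equals the supremum we obtain $\M_\#(x)=\sup_{t}g(t)\ge g(1)=\M(x)$, which is the asserted inequality $\M\le\M_\#$.

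Finally, Jensen concavity of $\M_\#=\M^\#$ follows by passing to the limit in the Jensen inequality for $\M$. For $x,y\in\R_+^n$ and small $t>0$, Jensen concavity of $\M$ gives
\Eq{*}{
\tfrac1t\M\Big(t\cdot\tfrac{x+y}{2}\Big)=\tfrac1t\M\Big(\tfrac{tx+ty}{2}\Big)\ge\tfrac12\Big(\tfrac1t\M(tx)+\tfrac1t\M(ty)\Big);
}
letting $t\to0^+$ and using that all three limits exist then yields $\M_\#\big(\tfrac{x+y}{2}\big)\ge\tfrac12\big(\M_\#(x)+\M_\#(y)\big)$, as required.

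The hard part will be the boundary analysis at $t=0$: since in general $0\notin I$, neither $\M(0)$ nor $g(0)$ is given a priori, so one must justify both the concave extension $\phi(0)=0$ and the anchored secant inequality on $[0,t_0)$. The mean property is exactly what resolves this---it forces $\phi(t)=\Err(t)$ together with $\phi(t)\to0$, supplying the missing boundary value---after which existence of the limit, monotonicity, and the propagation of concavity are all soft.
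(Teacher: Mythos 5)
The paper does not actually prove this proposition: it is imported verbatim from the reference [P\'ales--Pasteczka, \emph{On the homogenization of means}, Theorem~2.1] and used as a black box, so there is no internal proof to compare against. Your argument is correct and self-contained, and it follows the natural (and, as far as the cited source goes, essentially the standard) route: restrict to the ray $t\mapsto tx$, use the mean property $t\min(x)\le\M(tx)\le t\max(x)$ to anchor the concave function $\phi(t)=\M(tx)$ at $\phi(0^+)=0$, deduce that the secant slope $g(t)=\M(tx)/t$ is nonincreasing, hence has a limit as $t\to0^+$ equal to $\sup_t g(t)$, which simultaneously gives $\M_\#=\M^\#$, the mean bounds $\min\le\M_\#\le\max$, and (taking $t=1$, admissible for $x\in\dom\M$ because $I$ is an interval with $\inf I=0$) the inequality $\M\le\M_\#$; Jensen concavity of $\M_\#$ then passes to the limit in the midpoint inequality since all three limits exist. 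Each of these steps is sound: the upgrade from Jensen concavity to concavity via Bernstein--Doetsch is explicitly licensed by the paper, and your boundary extension $\phi(0):=0$ is justified by taking limits in the concavity inequality (or, equivalently, by deriving the anchored secant bound $\phi(s)\ge\tfrac st\phi(t)$ directly as a limit of three-point concavity inequalities, avoiding the extension altogether). The one point worth making explicit is why $tx$ lies in the domain for all $t\in(0,1]$ when $x\in I^n$ (namely, $I$ is an interval containing points arbitrarily close to $0$, so $(0,x_i]\subseteq I$ for each $i$), which you use when evaluating $g(1)$; you gesture at this but it deserves a sentence.
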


Let us now recall a property binding the Hardy constant of the mean and its homogenizations.
\begin{prop}[\cite{PalPas20}, Theorem~3.3] For every mean $\M$ on $I$ we have $\Hc(\M_\#)\le \Hc(\M)$. 

Moreover, if $\M$ is Jensen concave then $\Hc(\M_\#)=\Hc(\M^\#)= \Hc(\M)$.
\end{prop}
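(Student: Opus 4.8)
The plan is to reduce both assertions to the finite Hardy sequences and to invoke Proposition~\ref{prop:Hn->Hinfty} only at the end. First I would show that $\Hc_n(\M_\#)\le\Hc_n(\M)$ for every $n\in\N$. Fix $(x_1,\dots,x_n)\in\R_+^n$. Since $\inf I=0$, the scaled vector $t\cdot(x_1,\dots,x_n)$ lies in $I^n$ for all sufficiently small $t>0$, so applying the finite Hardy inequality \eq{E:HnM} to it and dividing by $t$ gives
\Eq{*}{
\sum_{k=1}^n \tfrac1t\,\M(tx_1,\dots,tx_k)\le \Hc_n(\M)\sum_{k=1}^n x_k
}
for all such $t$. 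I would then pass to $\liminf_{t\to 0}$. Because the right-hand side is independent of $t$, the lower limit of the left-hand side is still bounded by it; and by superadditivity of the lower limit, $\sum_k\liminf_t(\cdot)\le\liminf_t\sum_k(\cdot)$, the left-hand side dominates $\sum_{k=1}^n\M_\#(x_1,\dots,x_k)$. Combining these, $\sum_{k=1}^n\M_\#(x_1,\dots,x_k)\le \Hc_n(\M)\sum_{k=1}^n x_k$, which is precisely $\Hc_n(\M_\#)\le\Hc_n(\M)$.

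Before invoking Proposition~\ref{prop:Hn->Hinfty} I would check that $\M_\#$ is genuinely a mean: dividing $\min(tx)\le\M(tx)\le\max(tx)$ by $t$ and taking the lower limit preserves $\min\le\M_\#\le\max$. Then, letting $n\to\infty$ and applying Proposition~\ref{prop:Hn->Hinfty} to both $\M$ and $\M_\#$ yields $\Hc(\M_\#)\le\Hc(\M)$, establishing the first claim for an arbitrary mean.

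For the second part I would assume $\M$ is Jensen concave. By Proposition~\ref{prop:PalPas19aThm2.1} the two homogenizations coincide, $\M_\#=\M^\#$, so $\Hc(\M_\#)=\Hc(\M^\#)$ is immediate and only $\Hc(\M)\le\Hc(\M_\#)$ remains. The same proposition supplies $\M\le\M_\#$ on the domain of $\M$, and the Hardy constant is monotone under pointwise domination of means: a larger mean enlarges the left-hand side of \eq{E:Hc} term by term, so any constant admissible for $\M_\#$ is admissible for $\M$ (here one also uses $I\subseteq\R_+$, hence $\ell_1(I)\subseteq\ell_1(\R_+)$). This gives $\Hc(\M)\le\Hc(\M_\#)$, and combining with the first part closes the chain of equalities.

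The only genuinely delicate point is the interchange of the lower limit with the finite sum in the first step; this is exactly where superadditivity of $\liminf$ is indispensable, since the value of $t$ realizing the lower limit may vary from term to term, yet superadditivity lets us retain the single constant $\Hc_n(\M)$ on the right. Everything else, including the observations that $tx\in I^n$ for small $t$ and that $\M_\#,\M^\#$ are means, is routine bookkeeping.
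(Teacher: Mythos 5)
Your argument is correct. Note, however, that the paper does not prove this statement at all: it is imported verbatim from \cite{PalPas20} (Theorem~3.3 there), so there is no internal proof to compare against; what you have produced is a self-contained proof of a quoted result. Checking your two steps: the reduction to finite Hardy sequences works because each term $\tfrac1t\M(tx_1,\dots,tx_k)$ lies in $[\min(x_1,\dots,x_k),\max(x_1,\dots,x_k)]$, so all the lower limits are finite and superadditivity of $\liminf$ applies without any $\infty-\infty$ caveat; this yields $\Hc_n(\M_\#)\le\Hc_n(\M)$, and Proposition~\ref{prop:Hn->Hinfty} (valid for arbitrary means, hence for $\M_\#$ on $\R_+$) converts this into $\Hc(\M_\#)\le\Hc(\M)$. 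A pleasant feature of this route is that by working with finite sections you never need a Fatou-type interchange for infinite series, which a direct attack on \eq{E:Hc} would require. The second part is also sound: Proposition~\ref{prop:PalPas19aThm2.1} gives $\M_\#=\M^\#$ and $\M\le\M_\#$ on $\dom\M$, and monotonicity of the Hardy constant under pointwise domination (together with $\ell_1(I)\subseteq\ell_1(\R_+)$) gives the reverse inequality $\Hc(\M)\le\Hc(\M_\#)$. The only pedantic caveat is the implicit assumption that $0\notin I$ when you write $\ell_1(I)\subseteq\ell_1(\R_+)$; the paper's convention $\inf I=0$ leaves this ambiguous, but the same convention is glossed over throughout the paper, so this is not a gap attributable to you.
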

\subsection{Auxiliary results}
As we are going to generalize consideration enclosed in \cite{PalPas16}, we need to go inside the proof of the main theorem contained therein -- more precisely \cite[proof of Theorem~2.1]{PalPas16}. It was based on the result by Kedlaya \cite{Ked94}. The key tool was the existence of a family of matrices with some specific properties. Since we need to use them in the new setting let us isolate this combinatorial fact.
\begin{lem}\label{lem:matrix}
 For all $n \in \N$ there exists a matrix $K \in {\N_n}^{n! \times n!}$ such that:
 \begin{enumerate}[(i)]
  \item for all $s \in \N_n$ and $p \in \N_{n!}$ the number of appearance of the value $s$ in the $p$-th row of $K$ equals
\Eq{*}{
\alpha_p(s):=\begin{cases} 
  n!\ceil{\frac{p}{(n-1)!}}^{-1} &\quad \text{ if }s\le \ceil{\frac{p}{(n-1)!}},\\
  0 &\quad \text{ otherwise;}
  \end{cases}
}
  \item for all $s \in \N_n$ and $q \in \N_{n!}$ the number of appearance of the value $s$ in the $q$-th column of $K$ equals
$\alpha_q(s)$.
 \end{enumerate}
\end{lem}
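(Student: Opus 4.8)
The plan is to recast the lemma as an edge-colouring problem and then build the matrix by peeling off the values from the largest to the smallest. Write $m:=n!$ and, for $p\in\N_m$, put $k_p:=\ceil{p/(n-1)!}\in\N_n$; since $n\cdot(n-1)!=n!$, each value $j\in\N_n$ occurs as $k_p$ for exactly $(n-1)!$ indices $p$. Prescribing $K$ is the same as choosing, for every value $s\in\N_n$, the $0/1$ indicator matrix $A^{(s)}\in\{0,1\}^{m\times m}$ of the cells where $K$ equals $s$. In this language conditions (i) and (ii) say precisely that the $p$-th row sum of $A^{(s)}$ is $\alpha_p(s)$ and the $q$-th column sum of $A^{(s)}$ is $\alpha_q(s)$, while the requirement that every cell carries exactly one value becomes $\sum_{s=1}^n A^{(s)}=\mathbf 1$, the all-ones $m\times m$ matrix. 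Equivalently, we must colour the edges of the complete bipartite multigraph on the row- and column-index sets by the $n$ values so that value $s$ has degree $\alpha_p(s)$ at row $p$ and $\alpha_q(s)$ at column $q$. The counts are consistent: for every row (and, identically, every column) $\sum_{s=1}^n\alpha_p(s)=\sum_{s\le k_p} n!/k_p=k_p\cdot n!/k_p=n!=m$, which matches the degree in the complete bipartite graph.

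Next I would construct the matrices $A^{(s)}$ recursively, treating $s=n,n-1,\dots,1$ in this order. The top value is forced: $\alpha_p(n)=(n-1)!$ when $k_p=n$ and $0$ otherwise, so $A^{(n)}$ is supported on the $(n-1)!\times(n-1)!$ subgrid indexed by the level-$n$ rows and columns, where every row and column sum equals the full side-length $(n-1)!$; hence $A^{(n)}$ is the all-ones matrix on that block and zero elsewhere. Assume now $A^{(n)},\dots,A^{(s+1)}$ have been fixed. The value $s$ may only occupy cells whose row and column both have level $\ge s$ and which are not yet used; call this residual region $\Omega_s$. Inside $\Omega_s$ we need a $0/1$ matrix with the prescribed margins $\big(\alpha_p(s)\big)_p$ and $\big(\alpha_q(s)\big)_q$. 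Existence of such a matrix with bounded support is exactly a bipartite degree-sequence realisation problem, which I would settle by the Gale--Ryser theorem (equivalently by a defect version of Hall's theorem, or by integrality of the associated transportation/flow polytope): one checks that the row margins and column margins have equal total and satisfy the dominance inequality relative to the per-column capacities available in $\Omega_s$.

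The main obstacle is not any single realisation step but the uniform control of feasibility along the recursion: one must guarantee that peeling off the heavier top values $n,n-1,\dots,s+1$ never strips the high-level rows and columns of so many cells that the value-$s$ margins can no longer be placed. Because the margins $n!/k_p$ differ from level to level and the forced top blocks interact, the Gale--Ryser dominance inequalities have to be verified against the shrinking capacities of $\Omega_s$. The clean way to discharge this is to carry along the invariant that after the $s$-th step the residual row- and column-capacities equal $\sum_{s'<s}\alpha_\cdot(s')$, i.e.\ they are again of the same ``harmonic'' shape; for such margins the dominance inequalities can be checked directly, and the invariant also shows that the leftover after the last nontrivial step is exactly the value-$1$ margin, so the process closes with $A^{(1)}$ occupying all remaining cells. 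I would finally remark that a tempting shortcut --- indexing rows and columns by the permutations $\pi\in S_n$ with $k_\pi:=\pi(1)$ and setting the off-diagonal-block entries to the minimum of the two levels --- already violates the count for the value $j$ once $n\ge 5$ (it would force a negative number of $j$'s in the corresponding diagonal block, since $n!/j<(n-j)(n-1)!$ there), which is exactly why the careful peeling and feasibility bookkeeping above is needed.
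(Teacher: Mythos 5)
Your reduction of the lemma to a colour-by-colour decomposition is sound as far as it goes: the consistency check $\sum_{s}\alpha_p(s)=n!$, the observation that value $s$ can only occupy cells whose row and column levels are both $\ge s$, the forced all-$n$ block in the bottom-right corner, and the closing remark that the naive ``minimum of the two levels'' pattern fails from $n=5$ onward (because $n!/j<(n-j)(n-1)!$ for $j=2$, $n=5$) are all correct. The genuine gap is in the step you yourself flag as the main obstacle and then claim to discharge. The invariant you propose to carry -- that after peeling the values $n,\dots,s$ the residual row- and column-capacities equal $\sum_{s'<s}\alpha_\cdot(s')$ -- is automatic (each completed step removes exactly its margins, so this is mere arithmetic), and it is too weak to imply that the next step is feasible. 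Placing a $0/1$ matrix with prescribed margins inside a \emph{partially used} grid is a degree-constrained subgraph problem in the bipartite graph of free cells, and its solvability is not determined by the capacities of that graph: in a $2\times2$ grid the free regions $\{(1,1),(2,2)\}$ and $\{(1,2),(2,1)\}$ have identical row and column capacities, yet the margins ``one cell in row $1$, one cell in column $1$'' are realisable in the first and not in the second. Gale--Ryser presupposes unrestricted support, so it does not apply here; the Gale--Hoffman/max-flow conditions that do apply involve the actual positions of the free cells, and those positions depend on the choices made at earlier steps, which your argument leaves completely arbitrary. So ``for such margins the dominance inequalities can be checked directly'' is precisely the assertion that constitutes the lemma, and nothing in the proposal proves it. To close the gap you would need either to prove a no-dead-end property (every locally valid realisation of steps $n,\dots,s+1$ leaves a feasible instance for step $s$ -- a strong claim you have not established, and one that at best requires real work), or to make canonical, structure-preserving choices at each step, e.g.\ maintaining that inside each block (level-$k$ rows)$\times$(level-$l$ columns) the used cells form a biregular graph, and verify feasibility block by block.

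For comparison: the paper does not reprove the lemma at all; it isolates it from the proof of Theorem 2.1 of P\'ales--Pasteczka (2016), which rests on Kedlaya's 1994 construction. There the $n!\times n!$ matrix is built explicitly, by induction on $n$, assembling the matrix for $n$ out of blocks manufactured from the matrix for $n-1$; every cell is prescribed outright, so only the row and column counts need checking and the feasibility questions your peeling recursion runs into never arise. Your approach, if completed, would be a genuinely different (non-constructive) existence proof, but as written it restates the difficulty rather than resolving it.
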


Remarkably, this lemma does not involve means in its wording. It is very important for us since it has no superfluous assumptions. Lemma~\ref{lem:matrix} in a similar form was already used in \cite{ChuPalPas21}. It was also used in proving a sort of Kelaya's inequality for concave means in both \cite{Ked94} and \cite{PalPas16}.

Before we present next statement let us introduce a handy, sum-type notion. For a mean $\M$ on $I$, a vector of entries $x\in I^n$ and a nonzero vector of nonnegative integers $\lambda \in (\N \cup \{0\})^n  \setminus \{(0,\dots,0)\}$ we denote briefly
\Eq{*}{
\Mm_{i=1}^n (x_i,\lambda_i) := \M(\underbrace{x_1,\dots,x_1}_{\lambda_i\text{-times}},\dots,\underbrace{x_n,\dots,x_n}_{\lambda_n\text{-times}}).
}
Using this notion we formulate and proof a variation on \cite[Theorem~2.1]{PalPas16}. In the original result there was an additional assumption (repetition invariance of $\M$).

\begin{prop}\label{prop:Ked}
 Let $\M$ be a symmetric and Jensen-concave mean on~$I$. Then   for all $n \in \N$ and $x\in I^n$ we have
 \Eq{*}{
\sum_{i=1}^n \Mm_{j=1}^i \Big( x_j , \tfrac{n!}i\Big) \le n \Mm_{i=1}^n \Big(\frac{x_1+\dots+x_i}i, (n-1)!\Big)\,.
 }
\end{prop}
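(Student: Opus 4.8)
The plan is to use Lemma~\ref{lem:matrix} as a purely combinatorial skeleton on which to hang a single application of Jensen concavity, so that no rearrangement or collapsing of repeated entries (and hence no repetition invariance) is ever needed. First I would fix $n\in\N$ and $x\in I^n$, invoke Lemma~\ref{lem:matrix} to obtain the matrix $K\in{\N_n}^{n!\times n!}$, and for each row index $p\in\N_{n!}$ read that row off to build the selection vector $v_p:=(x_{K_{p,1}},\dots,x_{K_{p,n!}})\in I^{n!}$. Writing $r_p:=\ceil{\frac{p}{(n-1)!}}$, part~(i) of the lemma says that in $v_p$ each value $x_s$ with $s\le r_p$ occurs exactly $\frac{n!}{r_p}$ times and no other value occurs; note $\frac{n!}{r_p}$ is a genuine positive integer since $r_p\le n$. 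By symmetry of $\M$ this identifies $\M(v_p)=\Mm_{j=1}^{r_p}\big(x_j,\tfrac{n!}{r_p}\big)$.

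Next I would sum over all rows. Since the map $p\mapsto r_p=\ceil{\frac{p}{(n-1)!}}$ realizes each value $k\in\N_n$ for exactly $(n-1)!$ indices $p$, this gives
\Eq{*}{
\sum_{p=1}^{n!}\M(v_p)=(n-1)!\sum_{k=1}^n \Mm_{j=1}^{k}\Big(x_j,\tfrac{n!}{k}\Big),
}
so that the left-hand side of the claimed inequality is exactly $\frac{1}{(n-1)!}\sum_{p=1}^{n!}\M(v_p)$.

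The heart of the argument is then one use of Jensen concavity applied to the $n!$ vectors $v_1,\dots,v_{n!}$: since $\M|_{I^{n!}}$ is concave,
\Eq{*}{
\frac{1}{n!}\sum_{p=1}^{n!}\M(v_p)\le \M\Big(\tfrac{1}{n!}\textstyle\sum_{p=1}^{n!}v_p\Big),
}
with the average taken coordinatewise. It remains to identify the averaged vector. Its $t$-th coordinate is $\frac{1}{n!}\sum_{p=1}^{n!}x_{K_{p,t}}$, i.e. the average of the $x$-values along the $t$-th column of $K$; invoking part~(ii) of the lemma (the column-balance condition) this equals $\frac{x_1+\cdots+x_{r_t}}{r_t}$ with $r_t=\ceil{\frac{t}{(n-1)!}}$. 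As $t$ ranges over $\N_{n!}$ the index $r_t$ again hits each $k\in\N_n$ exactly $(n-1)!$ times, so by symmetry of $\M$ the right-hand side above equals $\Mm_{i=1}^n\big(\tfrac{x_1+\cdots+x_i}{i},(n-1)!\big)$. Substituting these two identities into the Jensen estimate and multiplying through by $n=\frac{n!}{(n-1)!}$ yields exactly the claimed inequality.

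The only genuine subtlety — and the step I would be most careful about — is the bookkeeping that turns the row- and column-balance conditions of $K$ into the correct multiplicities and arithmetic means. Once symmetry is used to put each $\M(v_p)$ and the averaged vector into the canonical repeated form, the inequality follows from concavity alone, exactly as in the repetition-invariant case of \cite[Theorem~2.1]{PalPas16}, but now without ever collapsing repeated entries, which is precisely why repetition invariance can be dropped here.
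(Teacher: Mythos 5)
Your proposal is correct and takes essentially the same approach as the paper: build the $n!\times n!$ matrix of entries $x_{K_{p,q}}$ from Lemma~\ref{lem:matrix}, apply Jensen concavity once to the row vectors, and use symmetry to identify both the row means (via part~(i)) and the mean of the column averages (via part~(ii)). The only cosmetic difference is that you compute the column averages directly from part~(ii), while the paper obtains the same identification by reapplying the row argument to the transposed matrix $A^T$.
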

\begin{proof}
 Fix $n \in \N$ and $x \in I^n$ arbitrarily. Let $K\in {\N_n}^{n! \times n!}$ be the matrix from Lemma~\ref{lem:matrix}. Define the matrix $A=(a_{p,q}) \in I^{n!\times n!}$ by $a_{p,q}:=x_{K_{p,q}}$. Then since $\M$ is Jensen-concave we obtain
 \Eq{EKK1}{
\A\big(\M\big( a_{p,q} \colon q \in \N_{n!}\big) \colon p \in \N_{n!}\big) 
\le 
\M\big(\A\big( a_{p,q} \colon p \in \N_{n!}\big) \colon q \in \N_{n!}\big).
 }
Now using the property (i) in Lemma~\ref{lem:matrix}, since $\M$ is symmetric we have
\Eq{*}{
\M\big( a_{p,q} \colon q \in \N_{n!}\big)
=\M\big( x_{K_{p,q}} \colon q \in \N_{n!}\big)
=\Mm_{j=1}^{n}(x_j,\alpha_p(j))
\quad \text{ for all }p \in \N_{n!}.
}
Thus, using the definition of $\alpha_p$ we can omit all terms with zero entries to obtain
\Eq{*}{
\M\big( a_{p,q} \colon q \in \N_{n!}\big)=\Mm_{j=1}^{\ceil{\frac{p}{(n-1)!}}}\bigg(x_j,\frac{n!}{\ceil{\tfrac{p}{(n-1)!}}}\bigg)
\quad \text{ for all }p \in \N_{n!}.
}
Now we can sum-up the above equality over $p \in \N_{n!}$. Then the right hand side naturally splits into $n$ blocks of cardinality $(n-1)!$ and, after dividing by $n!$ \mbox{side-by-side}, we arrive at 
\Eq{EKK2}{
\A\big(\M\big( a_{p,q} \colon q \in \N_{n!}\big) \colon p \in \N_{n!}\big) =\frac1n \sum_{i=1}^n\Mm_{j=1}^{i}\Big(x_j,\tfrac{n!}i\Big).
}
If we interchange $\M$ with $\A$, and reapply above consideration to the matrix $A^T$, we get
\Eq{EKK3}{
\M\big(\A\big( a_{p,q} \colon p \in \N_{n!}\big) \colon q \in \N_{n!}\big)
=\Mm_{i=1}^n \Big(\frac{x_1+\dots+x_i}i, (n-1)!\Big).
}
Finally, binding \eq{EKK2}, \eq{EKK1}, and \eq{EKK3} we obtain
\Eq{*}{
\frac1n \sum_{i=1}^n\Mm_{j=1}^{i}\Big(x_j,\tfrac{n!}i\Big)
&= \A\big(\M\big( a_{p,q} \colon q \in \N_{n!}\big) \colon p \in \N_{n!}\big) \\
&\le \M\big(\A\big( a_{p,q} \colon p \in \N_{n!}\big) \colon q \in \N_{n!}\big) \\
&=\Mm_{i=1}^n \Big(\frac{x_1+\dots+x_i}i, (n-1)!\Big),
}
which is trivially equivalent to our assertion.
\end{proof}

Let us conclude this section with a general property of Jensen-concave means defined on a positive half-line.

\begin{lem}\label{lem:conc->mon}
Every Jensen-concave mean on an interval $I$ with $\sup I=+\infty$ is monotone.
\end{lem}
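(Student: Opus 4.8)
The plan is to reduce the statement to a one-variable fact about concave functions on a half-line. The case $n=1$ is trivial, since $\min\le\M\le\max$ forces $\M(x_1)=x_1$ on $I^1$, which is nondecreasing. So I would fix $n\ge 2$, an index $k\in\N_n$, and freeze all coordinates except the $k$-th at arbitrary values $x_j\in I$ with $j\neq k$. Writing
\Eq{*}{
g(t):=\M(x_1,\dots,x_{k-1},t,x_{k+1},\dots,x_n)\qquad(t\in I),
}
the monotonicity of $\M$ in its $k$-th entry is precisely the claim that $g$ is nondecreasing. Since $\M|_{I^n}$ is Jensen concave and locally bounded (a mean is trapped between $\min$ and $\max$), the Bernstein--Doetsch theorem gives that it is concave; restricting a concave function to a line parallel to the $k$-th axis yields again a concave function, so $g$ is concave, hence continuous, on the interval $I$.

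Next I would establish a lower bound for $g$ on a neighbourhood of $+\infty$. Set $m:=\min_{j\neq k}x_j\in I$. The defining inequality $\M\ge\min$ gives $g(t)\ge\min(x_1,\dots,t,\dots,x_n)$, and for every $t\ge m$ the right-hand side equals $m$. Thus $g(t)\ge m$ for all $t\in I$ with $t\ge m$. This is exactly the point at which the hypothesis $\sup I=+\infty$ is used: it guarantees that this tail of $I$ is nonempty and unbounded, so the bound is available for arbitrarily large arguments.

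Finally I would run the standard concavity argument. Suppose, towards a contradiction, that $g$ is not nondecreasing, so there exist $s<t$ in $I$ with $g(s)>g(t)$. Because $g$ is concave, its difference quotients are nonincreasing, so for every $u>t$,
\Eq{*}{
\frac{g(u)-g(t)}{u-t}\le\frac{g(t)-g(s)}{t-s}<0,
}
and hence $g(u)\to-\infty$ as $u\to\infty$. This contradicts the lower bound $g(u)\ge m$ valid for all large $u$, and the contradiction shows that $g$ is nondecreasing. As $k$, the frozen entries, and $n$ were arbitrary, $\M$ is monotone.

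I expect there to be no deep obstacle here, only one point requiring care: the concavity argument must push $u$ to $+\infty$, so the lower bound has to be available precisely on that unbounded range, which is what $\sup I=+\infty$ delivers and which is also why the analogous statement would fail on a bounded interval. The upgrade from Jensen concavity to genuine concavity is free via Bernstein--Doetsch together with the local boundedness of means, both already invoked in the paper.
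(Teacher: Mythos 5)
Your proof is correct and rests on the same core idea as the paper's: assume a strict decrease somewhere, use concavity to extrapolate linearly along an unbounded ray in $I^n$ (available because $\sup I=+\infty$), conclude the values tend to $-\infty$, and contradict the bound $\M\ge\min$. The only differences are cosmetic: the paper works along a general direction $w\ge 0$ using rational convex combinations directly on the Jensen-concave mean (so it never needs Bernstein--Doetsch), whereas you reduce to one coordinate and first upgrade to genuine concavity via Bernstein--Doetsch, which the paper's preliminaries justify anyway.
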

\begin{proof}
Let $\M\colon \bigcup_{n=1}^\infty I^n \to I$ be a Jensen-concave mean. Assume to the contrary that there exist $n \in \N$, and vectors $v \in I^n$, $w \in \R_+^n$ such that
$\delta:=\M(v)-\M(v+w)>0$. Then, since $\M$ is concave, we have
\Eq{*}{
\M(v+w) \ge \frac{n\M(v)+\M(v+(n+1)w)}{n+1} \qquad \text{ for all }n\in \N.
}
Thus, by mean property we obtain
\Eq{*}{
\min(v) \le \M(v+(n+1)w) &\le (n+1) \M(v+w) -n \M(v)\\
&= \M(v+w)-n\delta \qquad \text{ for all }n \in\N.
}
In the limit case as $n \to \infty$ we obtain $\min(v)=-\infty$ contradicting the choice of $v$. 
\end{proof}

\section{Main results}
We are now heading towards the main result of this paper. In order to provide the proper setting we introduce a weaker form of repetition invariance. Namely, the mean $\M \colon \bigcup_{n=1}^\infty I^n \to I$ is called 
\emph{repetition superinvariant} if, for all $n,m\in\N$ and $(x_1,\dots,x_n)\in I^n$, the following inequality is valid
\Eq{*}{
\M(\underbrace{x_1,\dots,x_1}_{m\text{-times}},\dots,\underbrace{x_n,\dots,x_n}_{m\text{-times}})
   \ge\M(x_1,\dots,x_n).
}
Obviously every repetition invariant mean is also repetition superinvariant, however the converse implication is not true in general. Now binding Propositions~\ref{prop:PalPas19aThm2.1} and \ref{prop:Ked} we obtain the next lemma.
\begin{lem}\label{lem:x1xi/x1xn}
 Let $\M$ be a symmetric and Jensen-concave mean on $I$. Then for all $n \in \N$ and $x\in I^n$ we have
 \Eq{*}{
\sum_{i=1}^n \Mm_{j=1}^i \Big( x_j , \tfrac{n!}i\Big) \le n \Mst_{i=1}^n \Big(\frac{x_1+\dots+x_i}{i(x_1+\dots+x_n)}, (n-1)!\Big) \cdot (x_1+\dots+x_n).
 }
\end{lem}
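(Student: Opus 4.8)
The plan is to read the statement off Proposition~\ref{prop:Ked} by transporting its right-hand side to the homogenization $\M_\#$. Write $S:=x_1+\dots+x_n$. The decisive observation is that the left-hand side of the claimed inequality is \emph{literally} the left-hand side of Proposition~\ref{prop:Ked}, so the whole task reduces to bounding the right-hand side $n\,\Mm_{i=1}^n\big(\tfrac{x_1+\dots+x_i}{i},(n-1)!\big)$ of that proposition from above by $n\,\Mst_{i=1}^n\big(\tfrac{x_1+\dots+x_i}{iS},(n-1)!\big)\cdot S$. I would therefore apply Proposition~\ref{prop:Ked} first and then work only on its right-hand side.

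For that right-hand side I would perform two moves. First, each partial average $\tfrac{x_1+\dots+x_i}{i}$ is a convex combination of $x_1,\dots,x_i\in I$ and hence lies again in the interval $I$; thus the vector formed by these averages (each repeated $(n-1)!$ times) belongs to the domain of $\M$, and Proposition~\ref{prop:PalPas19aThm2.1} supplies the pointwise comparison $\M\le\M_\#$ there, i.e.
\[
\Mm_{i=1}^n\Big(\tfrac{x_1+\dots+x_i}{i},(n-1)!\Big)\le\Mst_{i=1}^n\Big(\tfrac{x_1+\dots+x_i}{i},(n-1)!\Big).
\]
Second, I would invoke the homogeneity of $\M_\#$, which follows at once from the defining formula $\M_\#(x)=\liminf_{t\to0}\tfrac1t\M(tx)$ by rescaling the auxiliary parameter $t$ and yields $\M_\#(Sy)=S\,\M_\#(y)$ for every $S>0$. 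Applying this with the normalized vector whose entries are $\tfrac{x_1+\dots+x_i}{iS}$ (these are positive, hence lie in the domain $\R_+$ of $\M_\#$) extracts the factor $S$ from the mean and converts the right-hand side of the last display into $\Mst_{i=1}^n\big(\tfrac{x_1+\dots+x_i}{iS},(n-1)!\big)\cdot S$. Multiplying by $n$ and chaining with Proposition~\ref{prop:Ked} produces exactly the asserted inequality.

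The argument is short because all the combinatorial substance is already carried by Proposition~\ref{prop:Ked}; the present lemma is merely its scale-invariant reformulation. Accordingly, the two points that deserve explicit justification are the structural ones just used: that $\M_\#$ is genuinely homogeneous, so the normalizing factor $S$ may be pulled out of the mean, and that the comparison $\M\le\M_\#$ is legitimately applicable, which rests on the elementary remark that the vector of partial averages stays inside $I$. Neither is a real obstacle, so I expect the verification of the homogeneity of $\M_\#$ (or a direct appeal to \cite{PalPas19a}) to be the only mildly delicate point.
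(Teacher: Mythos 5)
Your proof is correct and matches the paper's intended argument exactly: the paper states this lemma as an immediate consequence of ``binding'' Proposition~\ref{prop:Ked} with Proposition~\ref{prop:PalPas19aThm2.1} (giving $\M\le\M_\#$), plus the homogeneity of $\M_\#$ to extract the factor $x_1+\dots+x_n$, which is precisely your three-step chain. The two points you flag as delicate (homogeneity of $\M_\#$ and the partial averages lying in $I$) are handled correctly.
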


However, in view of Proposition~\ref{prop:PalPas19aThm2.1}, $\M_\#$ is a Jensen-concave mean of $\R_+$. Thus by Lemma~\ref{lem:conc->mon} it is monotone. Therefore we obtain the easy-to-see estimation which we formulate as a corollary. 

\begin{cor}\label{cor:H1}
 Let $\M$ be a symmetric and Jensen-concave mean on $I$. Then for all $n \in \N$ and $x\in I^n$ we have
 \Eq{*}{
&\quad \sum_{i=1}^n \Mm_{j=1}^i \Big( x_j , \tfrac{n!}i\Big)
\le n \Mst_{i=1}^n \Big(\frac{1}i, (n-1)!\Big) \cdot (x_1+\dots+x_n).
 }
\end{cor}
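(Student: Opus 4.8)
The plan is to read the corollary off directly from Lemma~\ref{lem:x1xi/x1xn}, simply replacing the arguments $\frac{x_1+\dots+x_i}{i(x_1+\dots+x_n)}$ of the homogenization $\M_\#$ on its right-hand side by the simpler numbers $\frac1i$. The single ingredient that makes this replacement legitimate is the monotonicity of $\M_\#$, so I would isolate that fact first.

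First I would establish that $\M_\#$ is nondecreasing in each entry. By Proposition~\ref{prop:PalPas19aThm2.1}, the Jensen concavity of $\M$ guarantees that $\M_\#$ is itself a Jensen-concave mean on $\R_+$; since $\sup\R_+=+\infty$, Lemma~\ref{lem:conc->mon} then applies and yields the monotonicity of $\M_\#$. I would stress that monotonicity is invoked for the homogenization and \emph{not} for $\M$ itself, which is exactly the point of the superinvariant setting in which $\M$ need not be monotone.

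Next I would compare the two families of arguments entrywise. Writing $S_i:=x_1+\dots+x_i$, the nonnegativity of the entries gives $S_i\le S_n$ for every $i\in\N_n$, so that $\frac{x_1+\dots+x_i}{i(x_1+\dots+x_n)}=\frac{S_i}{iS_n}\le\frac1i$. In the $\M_\#$-expression each of these fractions occurs exactly $(n-1)!$ times, so the monotonicity of $\M_\#$ lets me replace every occurrence of $\frac{S_i}{iS_n}$ by the larger value $\frac1i$ without decreasing the value, giving
\Eq{*}{
\Mst_{i=1}^n \Big(\frac{x_1+\dots+x_i}{i(x_1+\dots+x_n)}, (n-1)!\Big) \le \Mst_{i=1}^n \Big(\frac1i, (n-1)!\Big).
}
Multiplying this inequality by the nonnegative factor $n(x_1+\dots+x_n)$ and chaining it with Lemma~\ref{lem:x1xi/x1xn} yields the assertion. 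There is no genuine obstacle here: this is precisely the ``easy-to-see'' step announced before the statement, and the only subtlety worth flagging is that the whole argument takes place at the level of $\M_\#$, the passage to the homogenization being what renders Lemma~\ref{lem:conc->mon} applicable on the half-line $\R_+$.
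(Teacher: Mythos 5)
Your proposal is correct and follows exactly the paper's own route: the paper also derives the corollary from Lemma~\ref{lem:x1xi/x1xn} by noting that Proposition~\ref{prop:PalPas19aThm2.1} makes $\M_\#$ a Jensen-concave mean on $\R_+$, so Lemma~\ref{lem:conc->mon} gives its monotonicity, which permits replacing each argument $\frac{x_1+\dots+x_i}{i(x_1+\dots+x_n)}\le\frac1i$ on the right-hand side. Your write-up merely makes explicit the entrywise comparison that the paper labels ``easy-to-see.''
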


The right hand side of this inequality is close to the one which appears in \eq{E:HnM}. To adjust the left hand side we need to add an additional assumption (superinvariace).

\begin{thm}
 Let $\M$ be a symmetric, Jensen-concave, and repetition superinvariant mean on $I$. Then 
 \Eq{Hseqest}{
\Hc_n(\M) \le n \Mst_{i=1}^n \Big(\frac{1}i, (n-1)!\Big).
 }
   In particular 
\Eq{*}{
\Hc(\M) \le \liminf_{n \to \infty} n \Mst_{i=1}^n \Big(\frac{1}i, (n-1)!\Big).
}
\end{thm}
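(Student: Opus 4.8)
The plan is to derive the Theorem directly from Corollary~\ref{cor:H1} by using the two remaining hypotheses---repetition superinvariance and the definition of the Hardy sequence---to convert the left-hand side of the Corollary into exactly the quantity appearing in \eq{E:HnM}. First I would fix $n \in \N$ and $x \in I^n$ and examine the left-hand side $\sum_{i=1}^n \Mm_{j=1}^i \big( x_j, \tfrac{n!}{i}\big)$ term by term. For each fixed $i \in \N_n$, the $i$th summand is $\M$ applied to the vector $(x_1,\dots,x_i)$ in which each entry is repeated $\tfrac{n!}{i}$ times. Since $i \mid n!$, the multiplicity $\tfrac{n!}{i}$ is a positive integer, so this is precisely an $\tfrac{n!}{i}$-fold repetition of the vector $(x_1,\dots,x_i)$. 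Repetition superinvariance then gives
\Eq{*}{
\Mm_{j=1}^i \Big( x_j, \tfrac{n!}{i}\Big) \ge \M(x_1,\dots,x_i) \qquad \text{ for all } i \in \N_n,
}
since the displayed left side is $\M$ evaluated on each $x_j$ repeated $\tfrac{n!}{i}$ times while the right side is $\M(x_1,\dots,x_i)$.

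Summing this over $i \in \N_n$ yields the key lower bound $\sum_{i=1}^n \M(x_1,\dots,x_i) \le \sum_{i=1}^n \Mm_{j=1}^i \big( x_j, \tfrac{n!}{i}\big)$. Chaining this with the conclusion of Corollary~\ref{cor:H1} produces
\Eq{*}{
\sum_{i=1}^n \M(x_1,\dots,x_i) \le n \Mst_{i=1}^n \Big(\tfrac{1}{i}, (n-1)!\Big)\cdot(x_1+\dots+x_n).
}
Because this holds for every $x \in I^n$, comparing it with the defining inequality \eq{E:HnM} of $\Hc_n(\M)$---and recalling that $\Hc_n(\M)$ is the \emph{smallest} constant for which that inequality holds for all inputs---immediately gives $\Hc_n(\M) \le n \Mst_{i=1}^n \big(\tfrac{1}{i}, (n-1)!\big)$, which is \eq{Hseqest}.

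For the ``in particular'' clause I would invoke Proposition~\ref{prop:Hn->Hinfty}, which states that the Hardy sequence $\big(\Hc_n(\M)\big)_{n=1}^\infty$ is nondecreasing and converges to $\Hc(\M)$. Since a convergent sequence equals the $\liminf$ of any sequence dominating it term by term, passing to the lower limit in \eq{Hseqest} gives
\Eq{*}{
\Hc(\M) = \lim_{n\to\infty}\Hc_n(\M) = \liminf_{n\to\infty}\Hc_n(\M) \le \liminf_{n\to\infty} n\Mst_{i=1}^n\Big(\tfrac{1}{i},(n-1)!\Big),
}
as claimed. The only genuinely substantive step is the first one: recognizing that the multiplicities $\tfrac{n!}{i}$ are integers so that repetition superinvariance applies cleanly, and checking that the inequality runs in the correct direction to be compatible with the upper bound from the Corollary. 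Everything after that is bookkeeping---term-by-term summation, a single transitivity step, and a direct appeal to the definition of $\Hc_n$ together with Proposition~\ref{prop:Hn->Hinfty}; no delicate estimates or limiting arguments beyond the cited convergence are required.
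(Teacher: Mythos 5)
Your proposal is correct and follows essentially the same route as the paper: repetition superinvariance (with the observation that each multiplicity $\tfrac{n!}{i}$ is a positive integer) gives $\M(x_1,\dots,x_i) \le \Mm_{j=1}^i \big( x_j , \tfrac{n!}{i}\big)$ term by term, Corollary~\ref{cor:H1} supplies the upper bound, the definition of $\Hc_n(\M)$ as the smallest admissible constant yields \eq{Hseqest}, and Proposition~\ref{prop:Hn->Hinfty} handles the limit statement. The extra details you spell out (integrality of the multiplicities, the inequality direction, and the $\liminf$ bookkeeping) are exactly the steps the paper leaves implicit.
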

\begin{proof}

 Indeed, since $\M$ is repetition superinvariant for each sequence $x=(x_1,x_2,\dots,x_n)$ by Corollary~\ref{cor:H1} we get
 \Eq{*}{
\sum_{i=1}^n \M(x_1,\dots,x_i) &\le \sum_{i=1}^n \Mm_{j=1}^i \Big( x_j , \tfrac{n!}i\Big) \\
&\le n \Mst_{i=1}^n \Big(\frac{1}i, (n-1)!\Big) \cdot (x_1+\dots+x_n).
 }
which easily yields \eq{Hseqest}. Using this statement, in view of Proposition~\ref{prop:Hn->Hinfty}, we obtain the second part. 
\end{proof}

It is important to note that this theorem alone does not provide us a method to establish the Hardy constant, only its upper bound. Nevertheless, by virtue of Propositions \ref{prop:lower_estim_Hc} and \ref{prop:LowHar}, establishing a lower bound of $\Hc(\M)$ is way easier.

\section{Application to mixed means}
The idea of mixed means starts with a paper Carlson-Meany-Nelson \cite{CarMeaNel71}. The idea was to take the arithmetic mean of all $k$-element subsequences of $(x_1,\dots,x_n)$ and then calculate the geometric mean of the so-obtained vector of length $\binom nk$. Obviously we 
can replace arithmetic and geometric means be two others. This idea was developed in a quasiarithmetic setting by Sadikova~\cite{Sad06}. Such kind of means are well-defined if and only if $n \ge k$ (possibly fulfilled by $n=1$ since the value of a mean is trivial in this case). It cause some technical problems with regards to the Hardy property (cf.~\cite{Pas15b}). For this reason we restrict our consideration to the case $k=2$ only. We also consider slight modification of this idea which takes into account all pairs of elements, that is a vector of length $n^2$ (see definition below for details). 

It turns out that it does not affect the Hardy constant for a vast family of mixed power means. We conjecture that it is a general property.

For a symmetric mean $\M \colon \bigcup_{n=1}^\infty I^n \to I$ and $\Nm \colon I^2 \to I$ let us set two means $\M \circ \Nm \colon \bigcup_{n=1}^\infty I^n \to I$ and $\M \csqs \Nm \colon \bigcup_{n=1}^\infty I^n \to I$ by 
\Eq{*}{
\M \circ \Nm(x_1,\dots,x_n)&:=
\begin{cases}
x_1 &\quad \text{ for }n=1, \\
\M\big(\Nm(x_i,x_j) \colon 1\le i<j\le n\big) &\quad \text{ for }n\ge 2; \\
                             \end{cases}\\
\M \csqs \Nm(x_1,\dots,x_n)&:=
\M\big(\Nm(x_i,x_j) \colon i,j \in\ \N_n\big). 
}

Let us start with few easy-to-see properties.
\begin{rem}\label{rem:mix}
 Let $\M \colon \bigcup_{n=1}^\infty I^n \to I$ be a symmetric mean and $\Nm \colon I^2 \to I$. If both $\M$ and $\Nm$ are monotone (resp. homogeneous, concave or convex) then so are $\M \circ \Nm$ and $\M \csqs \Nm$.
 
Furthermore $\M \csqs \Nm$ is symmetric and if, additionally, $\Nm$ is symmetric then so is $\M \circ \Nm$.
\end{rem}

Next properties are just a bit more complicated. 

\begin{lem}\label{lem:compmix}
 Let $\M \colon \bigcup_{n=1}^\infty I^n \to I$ be a symmetric mean, $\Nm \colon I^2 \to I$, and $L,U\colon \bigcup_{n=1}^\infty I^n \to I$ be two monotone, associative means with $L \le \M \le U$ and $L \le \Nm \le U$. Then $L \le \M \circ \Nm \le U$ and $L \le \M \csqs \Nm \le U$.
\end{lem}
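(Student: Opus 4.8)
The plan is to isolate, for an arbitrary monotone, symmetric, associative mean $V$ on $I$, the following two \emph{reproducing identities}: for every $n \in \N$ and $x=(x_1,\dots,x_n) \in I^n$,
\Eq{*}{
V\big(V(x_i,x_j) \colon i,j \in \N_n\big) = V(x_1,\dots,x_n),
}
and, for $n \ge 2$,
\Eq{*}{
V\big(V(x_i,x_j) \colon 1\le i<j\le n\big) = V(x_1,\dots,x_n).
}
Once these are available, the whole assertion follows by sandwiching. Applying the hypotheses $\M \le U$ and $\Nm \le U$ together with the monotonicity of $U$ gives
\Eq{*}{
\M\csqs\Nm(x) &= \M\big(\Nm(x_i,x_j)\colon i,j\in\N_n\big) \le U\big(\Nm(x_i,x_j)\colon i,j\in\N_n\big)\\
&\le U\big(U(x_i,x_j)\colon i,j\in\N_n\big),
}
and the first identity (with $V=U$) rewrites the last expression as $U(x_1,\dots,x_n)$; the lower bound $L \le \M\csqs\Nm$ is obtained dually from $L \le \M$, $L \le \Nm$, monotonicity of $L$, and the identity with $V=L$. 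The bounds for $\M\circ\Nm$ follow in exactly the same way from the second identity, the case $n=1$ being trivial since $\M\circ\Nm(x_1)=x_1=L(x_1)=U(x_1)$.

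Proving the reproducing identities is the heart of the matter, and here I would use that associative means are symmetric and (as noted in the text) repetition invariant. The idea is to pass to the \emph{fully expanded} multiset $W$ obtained by replacing each two–variable value $V(x_i,x_j)$ by the pair of entries $x_i,x_j$. Reading $W$ as a concatenation of these pairs and applying associativity blockwise — moving each pair to the tail by symmetry, then replacing it by two copies of $V(x_i,x_j)$ — one sees that $V(W)$ equals $V$ evaluated at the list in which every value $V(x_i,x_j)$ occurs exactly twice; repetition invariance then collapses the doubled multiplicities and yields $V(W) = V\big(V(x_i,x_j)\colon \cdot\,\big)$. On the other hand, a direct count shows that in $W$ each entry $x_k$ occurs with the \emph{same} multiplicity — namely $2n$ in the full case and $n-1$ in the case $i<j$ — so repetition invariance applied to $W$ itself gives $V(W)=V(x_1,\dots,x_n)$. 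Comparing the two evaluations of $V(W)$ proves the identity.

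I expect the main obstacle to be the bookkeeping in this expansion step. Associativity as stated only permits replacing a \emph{suffix} block by repetitions of its mean, so each manipulation must be combined with symmetry to relocate the relevant block and with repetition invariance to restore a uniform multiplicity before the next collapse. The crucial structural fact making everything work is that both collapses are applied to lists in which every surviving value carries the same multiplicity (two in the pair–collapse, and $2n$ or $n-1$ in the entry–collapse), which is precisely the regime in which repetition invariance may be invoked; verifying this uniformity — in particular correctly accounting for the diagonal pairs $V(x_i,x_i)=x_i$ in the full case — is the one place where genuine care is required.
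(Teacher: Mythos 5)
Your proof is correct and takes essentially the same route as the paper: the paper's entire argument is the chain $\M \circ \Nm \le U \circ \Nm \le U \circ U = U$ (with the remaining inequalities analogous), where the identity $U \circ U = U$ --- precisely your ``reproducing identity'' --- is taken as immediate from associativity. You simply make explicit, and correctly verify via the multiplicity bookkeeping, the step that the paper leaves unproved.
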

Indeed, we have $\M \circ \Nm \le U \circ \Nm \le U \circ U =U$. The remaining inequalities are analogous.

\begin{lem}\label{lem:mix}
 Let $\M \colon \bigcup_{n=1}^\infty I^n \to I$ be a symmetric and repetition (super)invariant mean and $\Nm \colon I^2 \to I$ be a bivariate mean. Then $\M \csqs \Nm$ is a symmetric and repetition (super)invariant mean.
 \end{lem}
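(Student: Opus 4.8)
The plan is to verify the three assertions in turn: that $\M \csqs \Nm$ is a mean, that it is symmetric, and that it inherits repetition (super)invariance from $\M$. The first two are essentially immediate. Since $\Nm$ is a bivariate mean, every entry $\Nm(x_i,x_j)$ of the defining vector lies in $[\min(x_1,\dots,x_n),\max(x_1,\dots,x_n)]$; applying the mean $\M$ to a vector with all entries in this interval keeps the result inside it, so $\min \le \M\csqs\Nm \le \max$. Symmetry is already recorded in Remark~\ref{rem:mix}: permuting $(x_1,\dots,x_n)$ merely permutes the multiset $\{\Nm(x_i,x_j)\colon i,j\in\N_n\}$, and $\M$ is symmetric.

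The heart of the argument is repetition (super)invariance, and the key is a counting observation. Fix $n,m\in\N$ and $(x_1,\dots,x_n)\in I^n$, and let $y=(\underbrace{x_1,\dots,x_1}_{m},\dots,\underbrace{x_n,\dots,x_n}_{m})$ be the $m$-fold blow-up, a vector of length $mn$. I would first show that the multiset of pair-values $\{\Nm(y_a,y_b)\colon a,b\in\N_{mn}\}$ is exactly the $m^2$-fold repetition of $\{\Nm(x_i,x_j)\colon i,j\in\N_n\}$. Indeed, for each ordered pair $(i,j)\in\N_n^2$ there are exactly $m$ indices $a$ with $y_a=x_i$ and exactly $m$ indices $b$ with $y_b=x_j$, so the value $\Nm(x_i,x_j)$ occurs $m\cdot m=m^2$ times among the $(mn)^2$ pairs.

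Using symmetry of $\M$ to group the repeated copies together, it follows that $\M\csqs\Nm(y)$ equals $\M$ evaluated on the $m^2$-fold repetition of the vector $(\Nm(x_i,x_j)\colon i,j\in\N_n)$. Now I would apply the hypothesis on $\M$ with repetition factor $m^2$: if $\M$ is repetition invariant this value equals $\M(\Nm(x_i,x_j)\colon i,j\in\N_n)=\M\csqs\Nm(x_1,\dots,x_n)$, giving repetition invariance of $\M\csqs\Nm$; if $\M$ is only repetition superinvariant the same reduction yields $\M\csqs\Nm(y)\ge\M\csqs\Nm(x_1,\dots,x_n)$, i.e.\ repetition superinvariance. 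Both cases are handled by the identical combinatorial reduction, differing only in whether $\M$'s property is read as an equality or an inequality.

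I expect the only real care to be needed in the counting step — making precise that blowing up each entry $m$-fold multiplies the multiplicity of every pair-value by $m^2$, and that symmetry of $\M$ lets one pass from the arbitrary indexing of the pairs to the grouped form required by the definition of repetition (super)invariance. Everything else is bookkeeping.
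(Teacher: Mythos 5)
Your proof is correct and follows essentially the same route as the paper's: both reduce repetition (super)invariance of $\M \csqs \Nm$ to the counting fact that the pair-values of the $m$-fold repeated vector form the $m^2$-fold repetition of the pair-values $\big(\Nm(x_i,x_j)\big)_{i,j\in\N_n}$, and then invoke symmetry and repetition (super)invariance of $\M$ with factor $m^2$. The only cosmetic difference is that you repeat each entry $m$ times consecutively (matching the definition verbatim), while the paper concatenates $m$ copies of $x$ and silently uses symmetry of $\M\csqs\Nm$ to bridge the two orderings.
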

\begin{proof}
Applying Remark~\ref{rem:mix} we know that $\M\csqs \Nm$ is symmetric. Now set $n,m \in \N$ and $x \in I^n$. Let $y\in I^{nm}$ be a vector $x$ repeated $m$ times, i.e. 
 \Eq{*}{
 y_{nk+l}:=x_l \qquad \text{ where }k \in\{0,\dots,m-1\}\text{ and }l \in \N_n.
 }
 For $i,j \in \N_{nm}$ denote briefly $\nu_{i,j}:=\Nm(y_i,y_j)$.  Obviously $\nu_{i+n,j}=\nu_{i,j}$ for all $(i,j) \in \N_{(n-1)m} \times \N_{nm}$
 and $\nu_{i,j}=\nu_{i,j+n}$ for all $(i,j) \in \N_{nm} \times \N_{n(m-1)}$. Whence the vector 
 \Eq{*}{
 w:=(\nu_{1,1},\dots,\nu_{1,nm},\nu_{2,1},\dots,\nu_{2,nm},\dots,\nu_{nm,1},\dots,\nu_{nm,nm}) \in I^{(nm)^2}
 }
 contains each element of $(\nu_{i,j})_{i,j\in \N_n}$ exactly $m^2$ times.
 Thus, since $\M$ is symmetric and repetition superinvariant, we get
 \Eq{*}{
\M \csqs \Nm(y)=\M(w)\ge \M \big((\nu_{i,j})_{i,j\in \N_n} \big)=\M \csqs \Nm(y_1,\dots,y_n)=\M \csqs \Nm(x),
}
which shows that $\M \csqs \Nm$ is a superinvariant mean, too. 

To obtain the repetition invariant counterpart of this result we can simply replace the inequality sign above by the equality.
\end{proof}

\subsection{Mixed power means} In the narrow case when both $\M$ and $\Nm$ are power means these operators admit a number of additional properties. Hardy means among this family was studied by the author in the paper \cite{Pas15b}. In particular it was proved (cf. Corollary~1 therein) that 
\begin{enumerate}
 \item $\P_p\circ \P_q$ is a Hardy mean whenever $p<1$ or ($p=1$ and $q \le 0$);
 \item $\P_p\circ \P_q$ is not a Hardy mean whenever $p \ge 2$ or ($p \ge 1$ and $q > 0$).
\end{enumerate}
The Hardy property of $\P_p\circ \P_q$ in the remaining case, that is $(p,q) \in (1,2) \times (-\infty,0]$ remains the open problem.
We deliver some estimations of the Hardy constant for these means and establish its precise value in the case $q<p\le 1$.

\begin{lem}\label{lem:6}
 Let $p,q \in \R$. Then, for all $n \in \N$ with $n \ge 2$ and $v \in \R_+^n$, we get
\Eq{*}{
\P_p \circ \P_q(v)=\begin{cases}
                              \Big (\frac{n}{n-1} (\P_p \csqs \P_q(v))^p - \frac{1}{n-1} (\P_p(v))^p\Big)^{1/p} & \text{ for }p \ne 0;\\
                              \Big ((\P_0 \csqs \P_q(v))^n (\P_0(v))^{-1}\Big)^{1/(n-1)} & \text{ for }p = 0.
                             \end{cases}
}
\end{lem}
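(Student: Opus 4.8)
The plan is to reduce everything to the elementary combinatorics relating the two index sets. Since $\P_q$ is a symmetric, idempotent mean we have $\P_q(x_i,x_i)=x_i$ and $\P_q(x_i,x_j)=\P_q(x_j,x_i)$. Hence the family $\big(\P_q(x_i,x_j)\big)_{i,j\in\N_n}$ defining $\P_p\csqs\P_q(v)$ splits into the $n$ diagonal entries $x_1,\dots,x_n$ together with the $\binom n2$ off-diagonal values $\P_q(x_i,x_j)$ with $i<j$, each of the latter occurring exactly twice. These are precisely the quantities entering $\P_p\circ\P_q(v)$ and $\P_p(v)$ (and their $p=0$ analogues), so the identity will drop out once each mean is rewritten through its underlying power-sum, respectively power-product.

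For $p\neq0$ I would expand by the definition \eq{PM} and use the splitting above:
\Eq{*}{
n^2\big(\P_p\csqs\P_q(v)\big)^p=\sum_{i,j\in\N_n}\big(\P_q(x_i,x_j)\big)^p=2\sum_{1\le i<j\le n}\big(\P_q(x_i,x_j)\big)^p+\sum_{i=1}^n x_i^p.
}
By definition $\binom n2\big(\P_p\circ\P_q(v)\big)^p=\sum_{i<j}\big(\P_q(x_i,x_j)\big)^p$ and $n\big(\P_p(v)\big)^p=\sum_i x_i^p$, so the display becomes the linear relation
\Eq{*}{
n^2\big(\P_p\csqs\P_q(v)\big)^p=n(n-1)\big(\P_p\circ\P_q(v)\big)^p+n\big(\P_p(v)\big)^p.
}
Solving for $\big(\P_p\circ\P_q(v)\big)^p$ and taking the $p$-th root yields the first branch of the asserted formula.

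The case $p=0$ is formally identical with products in place of sums. The same decomposition gives
\Eq{*}{
\big(\P_0\csqs\P_q(v)\big)^{n^2}=\Big(\prod_{i<j}\P_q(x_i,x_j)\Big)^{2}\prod_{i=1}^n x_i=\big(\P_0\circ\P_q(v)\big)^{n(n-1)}\big(\P_0(v)\big)^{n},
}
and isolating $\P_0\circ\P_q(v)$ (taking an $n(n-1)$-th root, which is legitimate because $n\ge2$) gives the second branch.

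I do not expect a genuine obstacle; the argument is pure bookkeeping. The one point that needs care is tracking multiplicities: each off-diagonal pair is counted twice in the $\csqs$-family, whereas the $n$ diagonal terms contribute the bare entries $x_i$ via idempotency rather than any value of $\P_q$ at two distinct arguments. Keeping these factors of $2$ and $n$ straight is exactly what produces the coefficients $\tfrac{n}{n-1}$ and $-\tfrac1{n-1}$ (respectively the exponents $n$ and $-1$) in the statement.
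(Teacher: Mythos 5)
Your proof is correct, and it is precisely the ``straightforward calculation'' the paper alludes to (the paper omits the proof entirely, remarking only that it is a direct computation): splitting the $n^2$ values $\P_q(x_i,x_j)$ into the $n$ diagonal terms $x_i$ and the doubly-counted off-diagonal pairs, then solving the resulting linear relation between power sums (resp.\ products). The multiplicity bookkeeping, including the reduction of the exponent $1/(n(n-1))$ to the stated form $1/(n-1)$ in the $p=0$ case, is handled correctly.
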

Proof of this lemma is just a straightforward calculations. Due to this fact, in view of Lemma~\ref{lem:compmix} we obtain our next result.

\begin{lem}\label{lem:7}
Let $p,q \in \R$ with $p>q$. Then $\P_p \circ \P_q$ is a repetition superinvariant mean. Moreover for all $n \in \N$ and $v \in \R_+^n$ we get 
\Eq{*}{
\P_p \circ \P_q (v_1,\dots,v_n)&\le \lim_{m \to \infty} \P_p \circ \P_q (\underbrace{v_1,\dots,v_n,\dots,v_1,\dots,v_n}_{\text{ vector }v\text{ repeated }m\text{ times}})\\
&= \P_p \csqs \P_q(v_1,\dots,v_n).
}
\end{lem}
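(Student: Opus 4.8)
The plan is to reduce the whole statement to the single scalar inequality $\P_p\csqs\P_q(v)\le\P_p(v)$ and then read everything off the explicit formula of Lemma~\ref{lem:6}. The case $n=1$ is trivial, since every mean evaluated at a constant vector returns that constant, so all three quantities equal $v_1$; thus I would fix $n\ge2$ and $v\in\R_+^n$, and write $v^{(m)}$ for the vector $v$ repeated $m$ times (length $nm$). Because $\P_p\circ\P_q$ is symmetric (Remark~\ref{rem:mix}, as $\P_q$ is symmetric), repetition superinvariance is equivalent to comparing $v^{(m)}$ with $v$. Next I would record that $\P_p$ is repetition invariant and, by Lemma~\ref{lem:mix} applied with $\M=\P_p$ and $\Nm=\P_q$, so is $\P_p\csqs\P_q$; hence $\P_p(v^{(m)})=\P_p(v)$ and $\P_p\csqs\P_q(v^{(m)})=\P_p\csqs\P_q(v)$ for every $m$. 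Finally, applying Lemma~\ref{lem:compmix} with $L=\min$ and $U=\P_p$ (both monotone and associative, the latter being quasiarithmetic) together with $\P_q\le\P_p$, valid since $p>q$, shows at once that $\P_p\circ\P_q$ and $\P_p\csqs\P_q$ are means lying between $\min$ and $\P_p$; in particular $a:=\P_p\csqs\P_q(v)\le\P_p(v)=:b$, the key inequality driving the whole argument.

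The second step is to substitute the two invariance identities into Lemma~\ref{lem:6} evaluated at $v^{(m)}$, where the role of $n$ is played by $nm$. For $p\ne0$ this yields, after the algebraic rearrangement $\tfrac{nm}{nm-1}a^p-\tfrac{1}{nm-1}b^p=a^p+\tfrac{a^p-b^p}{nm-1}$,
\Eq{*}{
\P_p\circ\P_q(v^{(m)})=\Big(a^p+\frac{a^p-b^p}{nm-1}\Big)^{1/p},
}
while for $p=0$, taking logarithms,
\Eq{*}{
\log\big(\P_0\circ\P_q(v^{(m)})\big)=\log a+\frac{\log a-\log b}{nm-1}.
}
Letting $m\to\infty$ makes the correction term $\tfrac{1}{nm-1}$ vanish, so in both cases the right-hand side tends to $a=\P_p\csqs\P_q(v)$, which is exactly the claimed limit identity.

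It then remains to prove that $m\mapsto\P_p\circ\P_q(v^{(m)})$ is nondecreasing: comparing the value at $m=1$ (namely $\P_p\circ\P_q(v)$) with arbitrary $m$ gives repetition superinvariance, and passing to the limit gives the inequality $\P_p\circ\P_q(v)\le\P_p\csqs\P_q(v)$. Since $\tfrac{1}{nm-1}$ is positive and strictly decreasing in $m$, monotonicity hinges purely on the sign of $a^p-b^p$ (resp.\ $\log a-\log b$) combined with the monotonicity direction of $t\mapsto t^{1/p}$. I expect the main obstacle to be precisely this sign bookkeeping when $p<0$: there $a\le b$ forces $a^p-b^p\ge0$, so the bracket $a^p+\tfrac{a^p-b^p}{nm-1}$ \emph{decreases} in $m$, yet $t\mapsto t^{1/p}$ is also \emph{decreasing}, and the two reversals combine to keep the sequence nondecreasing. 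For $p>0$ one has $a^p-b^p\le0$, so the bracket increases while $t\mapsto t^{1/p}$ is increasing, again giving a nondecreasing sequence; and for $p=0$ the inequality $\log a\le\log b$ handles the logarithmic form directly. Verifying that all three sign regimes yield a nondecreasing sequence completes the proof.
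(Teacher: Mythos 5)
Your proposal is correct and follows essentially the same route as the paper: the paper's (one-line) proof likewise combines repetition invariance of $\P_p \csqs \P_q$, the formula of Lemma~\ref{lem:6}, and the inequality $\P_p \csqs \P_q \le \P_p$. You have merely filled in the details the paper leaves implicit (the $n=1$ case, the derivation of $\P_p \csqs \P_q \le \P_p$ via Lemma~\ref{lem:compmix}, and the sign bookkeeping for $p<0$, $p=0$, $p>0$), all of which check out.
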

Its proof is clear in view of repetition invariance of $\P_p \csqs \P_q$, Lemma~\ref{lem:6} and the equality $\P_p \csqs \P_q \le \P_p$.
In what follows we provide the lower estimation of the Hardy constant of $\P_p \csqs \P_q$ and $\P_p \circ \P_q$.

%
%

\begin{lem}\label{lem:estP-P}
For all $p,q \in \R$ we have $\Est(\P_p \csqs \P_q)=\varrho_{p,q}$, where
\Eq{varrho}{
\varrho_{p,q}=
\begin{cases}
 \Big( \iint_{[0,1]^2} \big( \frac{x^{-q}+y^{-q}}{2}\big)^{\frac pq}\:dx\:dy\Big)^{\frac1p} & \text{ if }pq\ne 0;\\
\exp \Big( \frac1q \iint_{[0,1]^2}\ln \big( \frac{x^{-q}+y^{-q}}{2}\big)\:dx\:dy\Big) & \text{ if }p=0,q\ne 0;\\
\Big(\int_0^1 x^{-\frac p2}\:dx\Big)^{\frac2p} & \text{ if }p\ne 0,q= 0;\\
e & \text{ if }p=q= 0. 
\end{cases}
}
In particular this value is finite if and only if the corresponding integral on the right hand side is finite.

Moreover $\Est(\P_p \circ \P_q)=\varrho_{p,q}$ for all $p  \in (-\infty,1)$ and $q\in \R$.
\end{lem}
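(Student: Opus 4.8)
The plan is to first reduce $\Est$ to a single sequential limit, then identify that limit as a Riemann sum for the integral defining $\varrho_{p,q}$, and finally transfer the computation from $\csqs$ to $\circ$ through the identity in Lemma~\ref{lem:6}. The opening step exploits homogeneity: since $\P_p$ and $\P_q$ are homogeneous, Remark~\ref{rem:mix} shows that both $\P_p\csqs\P_q$ and $\P_p\circ\P_q$ are homogeneous, so for any such mean $\M$ one has $\M(\tfrac y1,\dots,\tfrac yn)=y\,\M(1,\tfrac12,\dots,\tfrac1n)$ and the factor $\tfrac ny$ in \eq{E:EstM} cancels the $y$. Consequently the supremum over $y$ is vacuous and $\Est(\M)=\liminf_{n\to\infty} n\,\M(1,\tfrac12,\dots,\tfrac1n)$.

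Next, for $pq\neq0$ I would write out $\P_p\csqs\P_q(1,\tfrac12,\dots,\tfrac1n)$ explicitly. Substituting $x_k=1/k$ gives $\P_q(\tfrac1i,\tfrac1j)=\big(\tfrac{i^{-q}+j^{-q}}2\big)^{1/q}$, and pulling out the factor $n^{-q}$ from each term yields the key identity $n\,\P_p\csqs\P_q(1,\tfrac12,\dots,\tfrac1n)=\big(\tfrac1{n^2}\sum_{i,j=1}^n g(\tfrac in,\tfrac jn)\big)^{1/p}$, where $g(x,y):=\big(\tfrac{x^{-q}+y^{-q}}2\big)^{p/q}$. The bracket is exactly the Riemann sum of $g$ over $[0,1]^2$ at the right-hand grid points, its limit is the integral appearing in \eq{varrho}, and raising to the power $1/p$ then produces $\varrho_{p,q}$.

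The main obstacle is that $g$ is singular on the boundary of $[0,1]^2$ (for instance $g(x,y)\sim 2^{-p/q}x^{-p}$ as $x\to0$ when $p,q>0$), so the convergence of the Riemann sum to the improper integral is not automatic. I would resolve this using monotonicity: a direct computation shows $\partial_x g$ has the sign of $-p$ (and likewise for $\partial_y g$), so $g$ is monotone, in the same direction, in each variable. For a function monotone in each coordinate the right-endpoint sum is sandwiched between $\iint_{[0,1]^2}g$ and $\iint_{[1/n,1]^2}g$, whence $\tfrac1{n^2}\sum_{i,j}g(\tfrac in,\tfrac jn)\to\iint_{[0,1]^2}g$ with both sides finite or $+\infty$ simultaneously; this at once yields the formula and the asserted finiteness criterion. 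The three degenerate cases go the same way: for $p=0$ one takes logarithms and applies the monotone Riemann-sum argument to $\tfrac1q\ln\big(\tfrac{x^{-q}+y^{-q}}2\big)$ (whose $x$-derivative is always negative); for $q=0$ the inner mean is $\P_0(\tfrac1i,\tfrac1j)=(ij)^{-1/2}$, so the double sum factors as $\big(\tfrac1n\sum_i i^{-p/2}\big)^2$ and reduces to a one-dimensional Riemann sum; and $p=q=0$ gives $n(n!)^{-1/n}\to e$ by Stirling.

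Finally, for the ``moreover'' claim I would feed the result for $\csqs$ into Lemma~\ref{lem:6}. For $p\neq0$ that identity gives $(n\,\P_p\circ\P_q)^p=\tfrac n{n-1}(n\,\P_p\csqs\P_q)^p-\tfrac1{n-1}(n\,\P_p)^p$ at the point $(1,\tfrac12,\dots,\tfrac1n)$. Here $(n\,\P_p\csqs\P_q)^p\to\varrho_{p,q}^p$ by the first part, while the standard asymptotics $n\,\P_p(1,\tfrac12,\dots,\tfrac1n)\to\gamma_p$ (valid precisely because $p<1$, so that $\sum_{i\le n} i^{-p}\sim\tfrac{n^{1-p}}{1-p}$; Stirling for $p=0$) force the correction term $\tfrac1{n-1}(n\,\P_p)^p$ to vanish. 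Passing to the limit and using continuity of $t\mapsto t^{1/p}$ yields $n\,\P_p\circ\P_q\to\varrho_{p,q}$, and the $p=0$ branch of Lemma~\ref{lem:6} is treated identically after taking logarithms. The restriction $p<1$ is essential at exactly this step, since it is what guarantees that $\gamma_p$ is finite and the correction term is negligible.
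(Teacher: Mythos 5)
Your proposal is correct and follows essentially the same route as the paper: reduce $\Est$ to the limit of $n\,\M(1,\tfrac12,\dots,\tfrac1n)$ via homogeneity, identify that limit as a Riemann sum for the integral in \eq{varrho} (treating the cases $pq\ne0$, $p=0$, $q=0$, $p=q=0$ separately), and obtain the ``moreover'' part by passing to the limit in Lemma~\ref{lem:6} using the finiteness of $\gamma_p$ for $p<1$. Your monotonicity sandwich justifying convergence of the Riemann sums to the (possibly improper) integral is a welcome extra rigor step that the paper's proof takes for granted.
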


\begin{proof}
 In the case $pq \ne 0$, since power means are homogeneous we have
 \Eq{*}{
\Est(\P_p \csqs \P_q)&= \lim_{n \to \infty} n  \P_p \csqs \P_q \big(1,\tfrac12,\dots,\tfrac1n\big)
=\lim_{n \to \infty} \P_p \big( \P_q \big(\tfrac ni, \tfrac nj \big) \colon i,j \in \N_n\big)\\
&=\lim_{n \to \infty} \Bigg( \frac{1}{n^2} \sum_{i,j \in \N_n} \Bigg( \frac{\big(\frac ni \big)^q+\big(\frac ni \big)^q}{2}\Bigg)^{\frac pq}\Bigg)^{\frac1p}\\
&=\lim_{n \to \infty} \Bigg( \frac{1}{n^2} \sum_{i,j \in \N_n} \Bigg( \frac{\big(\frac in \big)^{-q}+\big(\frac in \big)^{-q}}{2}\Bigg)^{\frac pq}\Bigg)^{\frac1p}\\
&=\bigg( \iint_{[0,1]^2} \Big( \frac{x^{-q}+y^{-q}}{2}\Big)^{\frac pq}\:dx\:dy\bigg)^{\frac1p}.
}
In the second case when $p=0$ and $q \ne 0$ one gets
 \Eq{*}{
\Est(\P_0 \csqs \P_q)&=\lim_{n \to \infty} n  \P_0 \csqs \P_q \big(1,\tfrac12,\dots,\tfrac1n\big)
=\lim_{n \to \infty} \P_0 \big( \P_q \big(\tfrac ni, \tfrac nj \big) \colon i,j \in \N_n\big)\\
&=\lim_{n \to \infty} \exp \Bigg( \frac{1}{n^2} \sum_{i,j \in \N_n} \frac1q \ln \Bigg( \frac{\big(\frac ni \big)^q+\big(\frac ni \big)^q}{2}\Bigg)\Bigg)\\
&=\lim_{n \to \infty} \exp \Bigg( \frac{1}{q n^2} \sum_{i,j \in \N_n} \ln \Bigg( \frac{\big(\frac in \big)^{-q}+\big(\frac in \big)^{-q}}{2}\Bigg)\Bigg)\\
&=\exp \Bigg( \frac{1}{q } \iint_{[0,1]^2}\ln \bigg( \frac{x^{-q}+y^{-q}}{2}\bigg)\:dx\:dy\Bigg).
}
In the third case $p \ne 0$ and $q=0$ we obtain  
 \Eq{*}{
\Est(\P_p \csqs \P_0)&=\lim_{n \to \infty} n  \P_p \csqs \P_0 \big(1,\tfrac12,\dots,\tfrac1n\big)
=\lim_{n \to \infty} \P_p \big( \P_0 \big(\tfrac ni, \tfrac nj \big) \colon i,j \in \N_n\big)\\
&=\lim_{n \to \infty} \bigg( \tfrac{1}{n^2} \sum_{i,j \in \N_n} \Big(\sqrt{\tfrac ni \cdot \tfrac nj }\Big)^p\bigg)^{\frac1p}\\
&=\lim_{n \to \infty} \bigg( \tfrac1n \sum_{i \in \N_n} \big(\tfrac ni\big)^{\frac p2} \cdot \tfrac1n \sum_{j \in \N_n} \big(\tfrac nj\big)^{\frac p2} \bigg)^{\frac1p}
=\lim_{n \to \infty} \bigg( \tfrac{1}{n} \sum_{i=1}^n \big(\tfrac ni\big)^{\frac p2}\bigg)^{\frac2p}\\
&=\bigg(\lim_{n \to \infty}  \tfrac{1}{n} \sum_{i=1}^n \big(\tfrac in\big)^{-\frac p2}\bigg)^{\frac2p}
=\bigg(\int_0^1 x^{-\frac p2}\:dx\bigg)^{\frac2p}.
}
Finally in case $p=q=0$ we get $\P_0 \csqs \P_0=\P_0$. Thus using the well-known result for the geometric mean we obtain
$\Est(\P_0 \csqs \P_0)=\Est(\P_0)=e$, and the proof of the first part is complete.

Now we proceed to the moreover part. First fix $p \in (-\infty,1) \setminus \{0\}$ and $q \in \R$. Under this assumption by Lemma~\ref{lem:6} we obtain 
\Eq{*}{
&\quad \Est(\P_p \circ \P_q)=\lim_{n \to \infty} \P_p \circ \P_q \big(n,\tfrac n2,\dots,\tfrac nn\big)\\
&=\lim_{n \to \infty} 
\Big (\tfrac{n}{n-1} (\P_p \csqs \P_q(n,\tfrac n2,\dots,\tfrac nn))^p - \tfrac{1}{n-1} (\P_p(n,\tfrac n2,\dots,\tfrac nn))^p\Big)^{1/p}
}
But sequences $(\P_p \csqs \P_q(n,\tfrac n2,\dots,\tfrac nn))_{n=1}^\infty$ and $(\P_p(n,\tfrac n2,\dots,\tfrac nn))_{n=1}^\infty$ are convergent to $\Est(\P_p \csqs \P_q)$ and $\Est(\P_p)$, respectively (in particular the second limit is finite). Now we can pass to the limit termwise to obtain the equality $\Est(\P_p \circ \P_q)=\Est(\P_p \csqs \P_q)$.
To prove this equality for $p=0$ we need to use the second alternative in Lemma~\ref{lem:6}.
\end{proof}

\begin{thm}\label{thm:2}
For all $p,q \in (-\infty,1]$ the equality $\Hc(\P_p \csqs \P_q)=\varrho_{p,q}$ holds, where the value of $\varrho_{p,q}$ is defined by \eq{varrho}. 

If, additionally, $p>q$ then we also have $\Hc(\P_p \circ \P_q)=\varrho_{p,q}$.
\end{thm}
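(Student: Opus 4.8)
The plan is to prove each equality by squeezing $\Hc$ between the lower bound furnished by $\Est$ (Proposition~\ref{prop:LowHar}) and a matching upper bound, doing the symmetrized operator $\csqs$ first and then reading off the $\circ$ case from it. Throughout I use that $p\le 1$ is exactly the range in which the power mean $\P_p$ is concave.

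First part. Fix $p,q\in(-\infty,1]$. By Proposition~\ref{prop:LowHar} together with Lemma~\ref{lem:estP-P} one has at once $\Hc(\P_p\csqs\P_q)\ge\Est(\P_p\csqs\P_q)=\varrho_{p,q}$, so only the reverse inequality needs work. I would verify that $\P_p\csqs\P_q$ meets the hypotheses of the upper-bound theorem yielding \eq{Hseqest}: it is symmetric and homogeneous by Remark~\ref{rem:mix}; it is Jensen-concave because $\P_p,\P_q$ are concave for $p,q\le1$ and Remark~\ref{rem:mix} transfers concavity to $\P_p\csqs\P_q$; and it is repetition invariant, hence superinvariant, by Lemma~\ref{lem:mix} (as $\P_p$ is repetition invariant). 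Thus \eq{Hseqest} applies. Since $\P_p\csqs\P_q$ is homogeneous its homogenizations coincide with itself, and since it is repetition invariant the $(n-1)!$-fold repetition on the right-hand side of \eq{Hseqest} collapses, so that that side equals $n\,\P_p\csqs\P_q(1,\tfrac12,\dots,\tfrac1n)$. Combining \eq{Hseqest} with Proposition~\ref{prop:Hn->Hinfty} and passing to the $\liminf$, the right-hand side becomes exactly $\Est(\P_p\csqs\P_q)=\varrho_{p,q}$ by \eq{E:EstM}; hence $\Hc(\P_p\csqs\P_q)\le\varrho_{p,q}$. This settles the first equality, uniformly in whether $\varrho_{p,q}$ is finite or $+\infty$.

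Second part. Assume moreover $p>q$. The upper bound is immediate: Lemma~\ref{lem:7} gives $\P_p\circ\P_q\le\P_p\csqs\P_q$ pointwise, and dominating one mean by another dominates the associated Hardy sums termwise, so $\Hc$ is monotone for the pointwise order of means; therefore $\Hc(\P_p\circ\P_q)\le\Hc(\P_p\csqs\P_q)=\varrho_{p,q}$ by the first part. For the lower bound I again invoke $\Hc(\P_p\circ\P_q)\ge\Est(\P_p\circ\P_q)$ and compute $\Est(\P_p\circ\P_q)$. When $p<1$ this is precisely the ``moreover'' clause of Lemma~\ref{lem:estP-P}, giving $\Est(\P_p\circ\P_q)=\varrho_{p,q}$ and closing the case.

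The step I expect to be the main obstacle is the boundary value $p=1$ (necessarily with $q<1$), which Lemma~\ref{lem:estP-P} deliberately omits: there $\Est(\P_1)=\lim_n\frac1n\sum_{i=1}^n\frac ni=+\infty$, so the termwise passage to the limit used in its proof breaks down. To handle it I would apply the $p\ne0$ branch of Lemma~\ref{lem:6} to the harmonic vector $v=(1,\tfrac12,\dots,\tfrac1n)$, obtaining $n\,\P_1\circ\P_q(v)=\tfrac{n}{n-1}\bigl(n\,\P_1\csqs\P_q(v)\bigr)-\tfrac{H_n}{n-1}$, where $H_n=\sum_{i=1}^n\tfrac1i$. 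The first summand tends to $\Est(\P_1\csqs\P_q)=\varrho_{1,q}$ (with the factor $\tfrac{n}{n-1}\to1$), while the correction term $\tfrac{H_n}{n-1}=O\!\bigl(\tfrac{\ln n}{n}\bigr)\to0$, so the divergence of $\Est(\P_1)$ is harmless and $\Est(\P_1\circ\P_q)=\varrho_{1,q}$ persists. Hence $\Hc(\P_1\circ\P_q)\ge\varrho_{1,q}$, and combined with the upper bound this yields $\Hc(\P_p\circ\P_q)=\varrho_{p,q}$ for all admissible $p,q$.
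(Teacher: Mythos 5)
Your proof is correct, and it deviates from the paper's own argument in two substantive ways. For the first equality the paper does not route through its new theorem at all: having checked via Remark~\ref{rem:mix} and Lemma~\ref{lem:mix} that $\P_p \csqs \P_q$ is monotone, symmetric, concave and repetition invariant, it simply invokes the earlier result Proposition~\ref{prop:PalPas16Thm3.4} to get $\Hc(\P_p \csqs \P_q)=\Est(\P_p \csqs \P_q)$ in one stroke; you instead obtain the upper bound $\Hc\le\Est$ from \eq{Hseqest}, using homogeneity (so that the homogenization is the mean itself) and repetition invariance (so that the $(n-1)!$-fold repetition collapses). Both are sound; the paper's citation is shorter, while your route has the merit of running entirely on the machinery developed in this paper and of not needing monotonicity, which Proposition~\ref{prop:PalPas16Thm3.4} requires (here it is available anyway). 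In the second part the upper bound via Lemma~\ref{lem:7} and monotonicity of $\Hc$ under the pointwise order is identical in both texts, but your lower bound is more careful than the paper's: the paper closes the case $q<p\le 1$ by ``reapplying'' Lemma~\ref{lem:estP-P}, whose moreover clause is stated only for $p\in(-\infty,1)$, so the boundary case $p=1$ is strictly speaking not covered by that citation. Your patch --- applying the $p\ne 0$ branch of Lemma~\ref{lem:6} to the harmonic vector and observing that $n\,\P_1 \csqs \P_q\big(1,\tfrac12,\dots,\tfrac1n\big)\to\varrho_{1,q}$ (finite or infinite) while the correction term $H_n/(n-1)\to 0$ --- is exactly what is needed to justify $\Est(\P_1\circ\P_q)=\varrho_{1,q}$, and it fills a genuine, if small, gap in the published proof.
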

\begin{proof}
Let $p,q \in (-\infty,1]$. Then both $\P_p$ and $\P_q$ are monotone, symmetric and concave. Thus, by Remark~\ref{rem:mix}, so is $\P_p \csqs \P_q$. Moreover, by Lemma~\ref{lem:mix}, it is also repetition invariant.
Whence, by Proposition~\ref{prop:PalPas16Thm3.4} and Lemma~\ref{lem:estP-P}, we have $\Hc(\P_p \csqs \P_q)=\Est(\P_p \csqs \P_q)=\varrho_{p,q}$.

Furthermore for all $q<p\le1$ by Lemma~\ref{lem:7} we get $\P_p \circ \P_q \le \P_p \csqs \P_q$.
Now using Proposition~\ref{prop:LowHar} we can reapply Lemma~\ref{lem:estP-P} to obtain
\Eq{*}{
\varrho_{p,q}=\Est(\P_p \circ \P_q) \le \Hc(\P_p \circ \P_q) \le \Hc(\P_p \csqs \P_q)=\varrho_{p,q},
}
which completes our proof.
\end{proof}

We now show a simple application of this result.
\begin{exa}
The following inequalities are valid for all $x \in \ell_1(\R_+)$:
\Eq{*}{
\sum_{n=1}^\infty \sqrt[n^2]{\prod_{i=1}^n\prod_{j=1}^n \frac{x_i+x_j}2} \le 2\sqrt{e} \sum_{n=1}^\infty x_n;\\
x_1+\sum_{n=2}^\infty \sqrt[\binom n2\ ]{\prod_{1 \le i < j \le n} \frac{2x_ix_j}{x_i+x_j}} \le \frac{e^{3/2}}2 \sum_{n=1}^\infty x_n;\\
\sum_{n=1}^\infty \sqrt[n^2]{\prod_{i=1}^n\prod_{j=1}^n \frac{2x_ix_j}{x_i+x_j}} \le \frac{e^{3/2}}2 \sum_{n=1}^\infty x_n.
}
Moreover all constants on the right hand sides are sharp.
\end{exa}
\begin{proof}
Using the notion of mixed means and Hardy property we can rewrite the latter inequalities in a brief form
\Eq{*}{
\Hc(\P_0 \csqs \P_{1})&=2\sqrt{e}\quad \text{ and } \quad
 \Hc(\P_0 \circ \P_{-1})=\Hc(\P_0 \csqs \P_{-1})&=\frac{e^{3/2}}2.
 }
By Theorem~\ref{thm:2} we shall calculate $\varrho_{0,-1}$ and $\varrho_{0,1}$ given by \eq{varrho}. To this end, let us start with the integral part of the first value 
\Eq{*}{
\iint_{[0,1]^2}  \ln \big(\tfrac{x+y}2\big) \:dx\:dy&=
\int_0^2 (1-\abs{1-t}) \ln(t)\:dt-\ln 2\\
&=\int_0^1 t \ln(t)\:dt+\int_1^2 (2-t) \ln(t)\:dt-\ln 2\\
&=-\tfrac14 +\ln 4 - \tfrac 54 - \ln 2 = \ln 2 -\tfrac32.
}
Then, by \eq{varrho}, we have 
\Eq{*}{
\varrho_{0,-1}&=\exp \Big( - \iint_{[0,1]^2}\ln \big( \tfrac{x+y}{2}\big)\:dx\:dy\Big)=\exp(\tfrac32 - \ln 2)=\frac{e^{3/2}}2.
}

In the second step we evaluate the product $\varrho_{0,1} \cdot \varrho_{0,-1}$. However, by virtue of \eq{varrho}, it equals
\Eq{*}{
\varrho_{0,1}\cdot \varrho_{0,-1}&=
\exp \Big(\iint_{[0,1]^2}\ln \big( \frac{x^{-1}+y^{-1}}{2}\big)-\ln \big( \frac{x+y}{2}\big)\:dx\:dy\Big)\\
&=\exp \Big(-\iint_{[0,1]^2}\ln (xy)\:dxdy\Big)=\exp \Big(-2\int_0^1 \ln x\:dx\Big)=e^2.
}
Finally we have 
\Eq{*}{
\varrho_{0,1}=\frac{e^2}{\varrho_{0,-1}}=e^2 \cdot \frac2{e^{3/2}}=2\sqrt{e},
}
which ends our proof. 
\end{proof}


\end{document}